\documentclass[12pt,a4paper,final]{article}

%%%%%%%%%%%%%%%%%%%%%%%%%%%%%%%%%%%%%%%%%%%%%%%%%%%%%%%%%%%%%%%%%%%%%%%%%%%%%%%%%%%%%%%%%%%%%%%%%%%%%%%%%%%%%%%%%%%%%%%%%%%%%%%%%%%%%%%%%%%%%%%%%%%%%%%%%%%%%%%%%%%%%%%%%%%%%%%%%%%%%%%%%%%%%%%%%%%%%%%%%%%%%%%%%%%%%%%%%%%%%%%%%%%%%%%%%%%%%%%%%%%%%%%%%%%%
\usepackage{amsfonts}
\usepackage{amsmath}
\usepackage{amssymb}
\usepackage{graphicx}

\setcounter{MaxMatrixCols}{10}

\hyphenation{author another created financial paper re-commend-ed Post-Script}

\newtheorem{theorem}{Theorem}

\newtheorem{corollary}[theorem]{Corollary}

\newtheorem{definition}[theorem]{Definition}

\newtheorem{lemma}[theorem]{Lemma}

\newtheorem{proposition}[theorem]{Proposition}
\newtheorem{remark}[theorem]{Remark}

\newenvironment{proof}[1][Proof]{\noindent\textbf{#1.} }{\ \rule{0.5em}{0.5em}}

\begin{document}

\title{Lower bounds for transition probabilities on graphs}
\author{Andr\'{a}s Telcs \\
%EndAName
{\small Department of Computer Science and Information Theory, }\\
Budapest {\small University of Technology and Economics}\\
{\small telcs@szit.bme.hu}}
\maketitle

\begin{abstract}
The paper presents two results. \ The first one provides separate conditions
for the upper and lower estimate of the distribution of the exit time from
balls of a random walk on a weighted graph. The main result of the paper is
that the lower estimate follows from the elliptic Harnack inequality. \ The
second result is an off-diagonal lower bound for the transition probability
of the random walk.
\end{abstract}

\section{Introduction}

Today a large amount of work is devoted to upper and two-sided estimates of
heat kernels in different spaces (c.f. \cite{BCG},\cite{C},\cite{CG},\cite%
{HR},\cite{Ra1}).\ \ The main challenge is to find a connection between
structural properties of the space and the behavior of the heat kernel. The
study of the heat kernel in $%
%TCIMACRO{\U{211d} }%
%BeginExpansion
\mathbb{R}
%EndExpansion
^{n}$ of course dates back to much earlier results among others to Moser
\cite{M1},\cite{M2} and Aronson \cite{A}. In these celebrated works chaining
arguments were used. \ Chaining arguments appear in recent works as well. \
The present paper would like to provide a new one which replaces Aronson's
chaining argument for graphs to obtain heat kernel lower estimates. \ The
new approach eliminates the condition on the volume growth.

It is generally believed that the majority of the essential phenomena and
difficulties related to diffusion are present in the discrete case. All that
follows is in the discrete graph settings and discrete time, but one can see
that most of the arguments carry over to the continuous case.

In the course of the study of the pre-Sierpinski gasket ( c.f.\cite{J} \cite%
{B1} and bibliography there) and other fractal structures upper
or two-sided heat kernel estimates were given, which in the
simplest case has the form as follows:
\begin{equation}
p_{n}(x,y)+p_{n+1}(x,y)\geq \frac{c}{V(x,n^{1/\beta })}\exp \left( -C\left(
\frac{d^{\beta }(x,y)}{n}\right) ^{\frac{1}{\beta -1}}\right)  \label{LEb}
\end{equation}%
\begin{equation}
p_{n}(x,y)\leq \frac{C}{V(x,n^{1/\beta })}\exp \left( -c\left( \frac{%
d^{\beta }(x,y)}{n}\right) ^{\frac{1}{\beta -1}}\right)  \label{UEb}
\end{equation}%
In \cite{GT2} necessary an sufficient condition were given for $\left( \ref%
{LEb}\right) $ and $\left( \ref{UEb}\right) $. The standard route to the
lower estimate typically goes via the diagonal upper and lower bound (and
uses $\left( \ref{PUE}\right) $). \ The present paper develops a different
approach, which uses fewer assumptions. \ Neither volume growth conditions
nor heat kernel upper estimates are used. Let us mention here that in \cite%
{BCK} such estimates are given for strongly recurrent graphs without
explicitly assuming the elliptic Harnack inequality. Meanwhile it is easy to
show that the elliptic Harnack inequality follows directly from the
conditions there.

During the proof of the upper estimate an interesting side-result can be
observed.\ The distribution of the exit time from a ball has an upper
estimate under a particular condition. Consider $T_{B}$, the exit time from
a ball $B=B\left( x,R\right) .$ The expected value of $T_{B}$ is denoted by $%
E\left( x,R\right) =E\left( T_{B}|X_{0}=x\right) $ assuming that the
starting point is $x.$ On many fractals (or fractal type graph) the
space-time scaling function is $R^{\beta },$ $cR^{\beta }\leq E\left(
x,R\right) \leq CR^{\beta },$ for $\beta \geq 2,C>1>c>0$ constants and this
property implies that
\begin{equation}
\mathbb{P}\left( T_{B}<n|X_{0}=x\right) \leq C\exp \left( -c\left( \frac{%
R^{\beta }}{n}\right) ^{\frac{1}{\beta -1}}\right) .  \label{PUE}
\end{equation}%
This estimate (and the lower counterpart as well in the case of the Brownian
motion on the Sierpinski gasket) was given first in \cite{BP} and later an
independent proof was provided for more general settings in \cite{GT1} using
also a chaining argument.

One might wonder about the condition which ensures the same (up to the
constants) lower bound.

The main results are illustrated for the particular case $cR^{\beta }\leq
E\left( x,R\right) \leq CR^{\beta }$ postponing the general statements after
the necessary definitions. If the elliptic Harnack inequality (see
Definition $\left( \ref{EHI}\right) $) holds, then for $n\geq R,B=B\left(
x,R\right) $%
\begin{equation}
\mathbb{P}\left( T_{B}<n|X_{0}=x\right) \geq c\exp \left( -C\left( \frac{%
R^{\beta }}{n}\right) ^{\frac{1}{\beta -1}}\right) .  \label{eetdle}
\end{equation}%
and
\begin{equation}
p_{n}(x,y)+p_{n+1}(x,y)\geq \frac{c}{V(x,n^{1/\beta })r^{D}}\exp \left(
-C\left( \frac{d^{\beta }(x,y)}{n}\right) ^{\frac{1}{\beta -1}}\right) ,
\label{LEbb}
\end{equation}

where $r=\left( \frac{n}{d\left( x,y\right) }\right) ^{\frac{1}{\beta -1}%
},n\geq d\left( x,y\right) \geq 0,n>0$ \textbf{\ }and\textbf{\ }$D$ is a
fixed constant.

The results are new from several points of view. \ First of all, to our best
knowledge, lower estimates like $\left( \ref{eetdle}\right) $ are new in
this generality. \ One should also observe that the lower estimate $\left( %
\ref{eetdle}\right) $ matches with the upper one $\left( \ref{PUE}\right) $
obtained from stronger assumptions. The key steps are given in Proposition %
\ref{p2} and \ref{p3} which help to control the probability to hit a nearby
ball, which is usually more difficult than to control exit from a ball.

In Section \ref{sdef} the necessary definitions are introduced. \ In Section %
\ref{stdist} we give the general form and proof of $\left( \ref{eetdle}%
\right) $. In Section \ref{sVSR} we show a heat kernel lower bound (better
than $\left( \ref{LEbb}\right) $) for very strongly recurrent walks and in
Section \ref{sle} we show a result which contains $\left( \ref{LEbb}\right) $
as a particular case.

\textbf{Acknowledgement}

The author is indebted to Professor Alexander Grigor'yan for many useful
discussions and particularly for the remarks helped to clarify the proof of
Lemma \ref{lptt>c}. Thanks are due to the referee for the careful reading of
the paper and many helpful suggestions.

\section{Basic definitions}

\label{sdef}In this section we give the basic definitions for our
discussion. \ Let us consider an infinite connected graph $\Gamma $. \ We
assume, for sake of simplicity, that there are no multiple edges and loops.

Let $\mu _{x,y}=\mu _{y,x}>0$ be a symmetric weight function given on the
edges $x\sim y.$ These weights induce a measure $\mu (x)$%
\begin{eqnarray*}
\mu (x) &=&\sum_{y\sim x}\mu _{x,y}, \\
\mu (A) &=&\sum_{y\in A}\mu (y)
\end{eqnarray*}%
on the vertex sets $A\subset \Gamma .$ \ The weights $\mu _{x,y}$ define a
reversible Markov chain $X_{n}\in \Gamma $, i.e., a random walk on the
weighted graph $(\Gamma ,\mu )$ with transition probabilities
\begin{align*}
P(x,y)& =\frac{\mu _{x,y}}{\mu (x)}, \\
P_{n}(x,y)& =\mathbb{P}(X_{n}=y|X_{0}=x).
\end{align*}%
The transition \textquotedblright density\textquotedblright\ or heat kernel
for the discrete random walk is defined as%
\begin{equation*}
p_{n}\left( x,y\right) =\frac{1}{\mu \left( y\right) }P_{n}\left( x,y\right)
.
\end{equation*}%
To avoid parity problems we introduce
\begin{equation*}
\widetilde{p}_{n}\left( x,y\right) =p_{n}\left( x,y\right) +p_{n+1}\left(
x,y\right) .
\end{equation*}%
We will assume in the whole paper that the one step transition probabilities
are uniformly separated from zero, i.e. there is a $p_{0}>0$ such that
\begin{equation}
P\left( x,y\right) \geq p_{0}>0  \tag{$p_{0}$}  \label{p0}
\end{equation}%
for all $x\sim y,$ $\ \,x,y\in \Gamma .$

\begin{definition}
The graph is equipped with the usual (shortest path length) graph distance $%
d(x,y)$ and open metric balls are defined for $x\in \Gamma ,$ $R>0$ as
\begin{eqnarray*}
B(x,R) &=&\{y\in \Gamma :d(x,y)<R\}, \\
S\left( x,R\right) &=&\{y\in \Gamma :d(x,y)=R\}
\end{eqnarray*}%
and the $\mu -$measure of $B\left( x,R\right) $ denoted by $V(x,R)$%
\begin{equation*}
V\left( x,R\right) =\mu \left( B\left( x,R\right) \right) .
\end{equation*}
\end{definition}

\begin{definition}
We use
\begin{equation*}
\overline{A}=\left\{ y\in \Gamma :\exists x\in A,x\sim y\right\}
\end{equation*}%
for the closure of a set $A,$ Denote $\partial A=\overline{A}\backslash A\ $%
and $A^{c}=\Gamma \backslash A$ the complement of $A.$\bigskip
\end{definition}

\begin{definition}
In general, $a_{\xi }\simeq b_{\xi }$ will mean that there is a $C>0$ such
that for all $\xi $%
\begin{equation*}
\frac{1}{C}a_{\xi }\leq b_{\xi }\leq Ca_{\xi }.
\end{equation*}
\end{definition}

Unimportant constants will be denoted by $c,C$ and they may change from
place to place absorbing other intermediate constants.

Let us introduce the exit time $T_{A}$ \ for a set $A\subset \Gamma .$

\begin{definition}
The exit time from a set $A$ is defined as
\begin{equation*}
T_{A}=\inf \{t\geq 0:X_{n}\in A^{c}\},
\end{equation*}%
its expected value is denoted by
\begin{eqnarray*}
E_{y}(A) &=&\mathbb{E}(T_{A}|X_{0}=y), \\
E_{y}\left( x,R\right) &=&E_{y}\left( B\left( x,R\right) \right)
\end{eqnarray*}%
and we will use the $E=E(x,R)=E_{x}\left( B\left( x,R\right) \right) $ and $%
T_{x,R}=T_{B\left( x,R\right) }$ short notations.
\end{definition}

The definition implies that
\begin{equation}
E\left( x,1\right) =1.  \label{one}
\end{equation}

\begin{definition}
The hitting time $\tau _{A}$ of a set $A\subset \Gamma $ is defined by
\begin{equation*}
\tau _{A}=T_{A^{c}},
\end{equation*}%
and we write $\tau _{x,R}=\tau _{B\left( x,R\right) }$.
\end{definition}

\begin{definition}
We introduce the maximal exit time for $x\in \Gamma ,R>0$ by
\begin{equation*}
\overline{E}\left( x,R\right) =\max\limits_{y\in B\left( x,R\right)
}E_{y}\left( x,R\right) .
\end{equation*}
\end{definition}

\begin{definition}
One of the key assumptions in our study is the condition $\left( \overline{E}%
\right) $: there is a $C>0$ such that for all $x\in \Gamma ,R>0$%
\begin{equation}
\overline{E}\left( x,R\right) \leq CE\left( x,R\right)
\end{equation}%
is true.
\end{definition}

\begin{definition}
We say that the \emph{time comparison principle} holds for $\left( \Gamma
,\mu \right) $ if there is a $C_{T}>1$ constant such that for any $x\in
\Gamma ,R>0,y\in B\left( x,R\right) $

\begin{equation}
\frac{E\left( y,2R\right) }{E\left( x,R\right) }\leq C_{T}.  \label{TC}
\end{equation}
\end{definition}

\begin{proposition}
From the time comparison principle it follows that
\begin{equation}
\frac{E\left( x,2R\right) }{E\left( x,R\right) }\leq C_{T},  \label{TD}
\end{equation}%
\begin{equation}
\overline{E}\left( x,R\right) \leq CE\left( x,R\right)  \label{Ebar}
\end{equation}%
and there is a constant $A_{T}$ such that for all $x\in \Gamma ,R>0$
\begin{equation}
E\left( x,A_{T}R\right) \geq 2E\left( x,R\right) .  \label{PD3E}
\end{equation}
\end{proposition}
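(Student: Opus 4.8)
The plan is to derive all three conclusions from the single hypothesis $\left(\ref{TC}\right)$, treating them in increasing order of difficulty. The first two are essentially immediate. For $\left(\ref{TD}\right)$ I would simply specialise $\left(\ref{TC}\right)$ to $y=x$: since $d(x,x)=0<R$ we have $x\in B(x,R)$, so $E(x,2R)\le C_T E(x,R)$. For $\left(\ref{Ebar}\right)$, fix $y\in B(x,R)$; the triangle inequality gives $B(x,R)\subseteq B(y,2R)$ (if $d(x,v)<R$ and $d(x,y)<R$ then $d(y,v)<2R$), and a larger set takes at least as long to leave, so $E_y(x,R)=E_y(B(x,R))\le E_y(B(y,2R))=E(y,2R)$. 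Applying $\left(\ref{TC}\right)$ to the right-hand side yields $E_y(x,R)\le C_T E(x,R)$, and maximising over $y\in B(x,R)$ gives $\left(\ref{Ebar}\right)$ with $C=C_T$.

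The substance of the proposition is $\left(\ref{PD3E}\right)$. I would first establish the one-step lower bound $E(x,2R)\ge (1+C_T^{-2})E(x,R)$ and then iterate it. To get the one-step bound, decompose the exit from $B(x,2R)$ at the instant $\sigma=T_{x,R}$ of leaving the inner ball. Since $B(x,R)\subseteq B(x,2R)$ we have $\sigma\le T_{x,2R}$ and $X_\sigma$ stays in $B(x,2R)$, so the strong Markov property gives
\[
E(x,2R)=E(x,R)+\mathbb{E}_{x}\big[E_{X_\sigma}(x,2R)\big],
\]
where $X_\sigma$ lies at distance $R$ from $x$. Because $B(X_\sigma,R)\subseteq B(x,2R)$, one has $E_{X_\sigma}(x,2R)\ge E(X_\sigma,R)$, so the whole matter reduces to bounding $E(w,R)$ from below by a fixed multiple of $E(x,R)$ for the boundary points $w$ with $d(x,w)=R$.

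To produce that lower bound I would chain $\left(\ref{TC}\right)$ through a point $m$ on a geodesic from $x$ to $w$ chosen so that $d(x,m)<R$ and $d(w,m)<R$. Applying $\left(\ref{TC}\right)$ with centre $m$ and $y=x$ gives $E(m,R)\ge C_T^{-1}E(x,R)$, while applying it with centre $w$ and $y=m$ gives $E(w,R)\ge C_T^{-1}E(m,R)$; together these yield $E(w,R)\ge C_T^{-2}E(x,R)$. Substituting into the decomposition gives $E(x,2R)\ge(1+C_T^{-2})E(x,R)$. Iterating $j$ times produces $E(x,2^{j}R)\ge(1+C_T^{-2})^{j}E(x,R)$, and choosing $j$ so large that $(1+C_T^{-2})^{j}\ge 2$ furnishes the constant $A_T=2^{j}$ demanded by $\left(\ref{PD3E}\right)$.

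The main obstacle is exactly this midpoint comparison. As stated, $\left(\ref{TC}\right)$ only controls $E$ from above at interior points in terms of the value at the centre, so a single application can never lower-bound $E(\cdot,R)$ on the boundary sphere; the device is to invoke it twice in opposite directions through the intermediate point $m$. A little care with parity and with the integer nature of the graph distance is needed to guarantee that such an $m$ with both $d(x,m)<R$ and $d(w,m)<R$ actually exists, the case of small $R$ being handled directly via $E(x,1)=1$.
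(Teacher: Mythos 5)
Your proposal is correct, and here the comparison is somewhat one-sided: the paper contains no argument for this proposition at all, only the remark ``for the easy proofs see \cite{Td}'', so your write-up supplies in full what the paper delegates to a reference. The first two parts are done the only natural way: $y=x$ in $\left( \ref{TC}\right) $ gives $\left( \ref{TD}\right) $, and the inclusion $B(x,R)\subseteq B(y,2R)$ for $y\in B(x,R)$ plus monotonicity of exit times under set inclusion gives $\left( \ref{Ebar}\right) $ with $C=C_{T}$. For $\left( \ref{PD3E}\right) $, the substantive part, your three ingredients all check out: the strong Markov decomposition $E(x,2R)=E(x,R)+\mathbb{E}_{x}\left[ E_{X_{\sigma }}(x,2R)\right] $ at $\sigma =T_{x,R}$ is legitimate since $B(x,R)\subseteq B(x,2R)$ forces $\sigma \leq T_{x,2R}$; the exit site satisfies $d(x,X_{\sigma })=\lceil R\rceil $ exactly (one step changes the distance by at most one), so $B(X_{\sigma },R)\subseteq B(x,2R)$ and hence $E_{X_{\sigma }}(x,2R)\geq E(X_{\sigma },R)$; and the two opposite-direction applications of $\left( \ref{TC}\right) $ through a geodesic midpoint $m$, namely $E(x,2R)\leq C_{T}E(m,R)$ and $E(m,2R)\leq C_{T}E(w,R)$ combined with monotonicity of $E$ in the radius, give $E(w,R)\geq C_{T}^{-2}E(x,R)$ whenever $R\geq 2$, which is exactly when such an $m$ with $d(x,m)<R$ and $d(m,w)<R$ exists. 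With the deterministic bound $E(x,2)\geq 2=2E(x,1)$ covering $R=1$, the one-step inequality $E(x,2R)\geq (1+C_{T}^{-2})E(x,R)$ holds at every scale and iterates to $\left( \ref{PD3E}\right) $ with $A_{T}=2^{j}$ once $(1+C_{T}^{-2})^{j}\geq 2$. The midpoint device is the right way around the apparent asymmetry of $\left( \ref{TC}\right) $, which by itself only bounds $E$ at interior points from above; your double application is what converts it into the needed lower bound on the sphere. The only blemish is the regime of non-integer $R<1$, where balls are singletons and $\left( \ref{PD3E}\right) $ as literally stated degenerates; but this is a convention issue inherited from the paper's formulation (which tacitly takes $R\geq 1$), not a gap in your argument.
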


\begin{remark}
For the easy proofs see \cite{Td}. One can deduce that $\left( \ref{TD}%
\right) $ is equivalent to that there is a $\beta \geq 1$ and $C>0$ such
that for all $R>r>0,x\in \Gamma ,y\in B\left( x,R\right) $
\begin{equation}
\frac{E\left( x,2R\right) }{E\left( x,r\right) }\leq C\left( \frac{2R}{r}%
\right) ^{\beta },  \label{beta}
\end{equation}%
and it implies
\begin{equation*}
E\left( x,R\right) \leq CR^{\beta }.
\end{equation*}%
Similarly $\left( \ref{PD3E}\right) $ is equivalent to that there are $\beta
^{\prime }>0,c>0$ such that for all $x\in \Gamma ,R>r>0,y\in B\left(
x,R\right) $
\begin{equation}
c\left( \frac{2R}{r}\right) ^{\beta ^{\prime }}\leq \frac{E\left(
x,2R\right) }{E\left( x,r\right) }  \label{betaprime}
\end{equation}%
and from $\left( \ref{beta}\right) $ it follows that%
\begin{equation*}
E\left( x,R\right) \geq cR^{\beta ^{\prime }}.
\end{equation*}
\end{remark}

\begin{remark}
\label{rEbar}It is also easy to see that $\left( \overline{E}\right) $
implies $\left( \ref{PD3E}\right) $ and hence $\left( \ref{betaprime}\right)
$ as well.
\end{remark}

\begin{definition}
For the mean exit time $E\left( x,R\right) ,$ $R\in \mathbb{N}$ \ we define
the inverse in \ the second variable
\begin{equation*}
e\left( x,n\right) =\min \left\{ r\in \mathbb{N}:E\left( x,r\right) \geq
n\right\} .
\end{equation*}
\end{definition}

\begin{remark}
The inverse function $e\left( x,n\right) $ is well-defined since $E\left(
x,R\right) $ is strictly increasing for $R\in \mathbb{N}$ (cf. \cite{TER}).
\end{remark}

\begin{definition}
For a given $x\in \Gamma ,n\geq R>0$ let us define $k=k\left( x,n,R\right) $
as the maximal integer for which
\begin{equation*}
\frac{n}{k}\leq q\min\limits_{z\in B\left( x,R\right) }E\left( z,\frac{R}{k}%
\right) ,
\end{equation*}%
where $q$ is a fixed constant. \ Let $k=1$ by definition if there is no such
integer.
\end{definition}

\begin{definition}
Let us denote by $\pi _{x,y}$ the the union of the vertices of shortest
paths connecting $x$ \ and $y$.
\end{definition}

\begin{definition}
For $x,y\in \Gamma ,n\geq R>0,C>0$ let us define \newline
$l=l_{C}\left( x,y,n,R\right) $ as the minimal integer for which
\begin{equation*}
\frac{n}{l}\geq Q\max\limits_{z\in \pi _{x,y}}E\left( z,\frac{CR}{l}\right) ,
\end{equation*}%
where $Q$ is a fixed constant (to \ be specified later.), \ Let $l=R$ by
definition if there is no such integer. \ If $d\left( x,y\right) =R$ we will
use the shorter notation $l_{C}\left( x,y,n\right) =l_{C}\left(
x,y,n,d\left( x,y\right) \right) $
\end{definition}

\begin{definition}
For a given $x\in \Gamma ,n\geq R>0$ let us define
\begin{equation*}
\nu =\nu \left( x,n,R\right) =\min\limits_{y\in S\left( x,2R\right)
}l_{9}\left( x,y,n,R\right) .
\end{equation*}
\end{definition}

\begin{remark}
One can show easily from $\left( \ref{beta}\right) $ that%
\begin{equation*}
k\left( x,n,R\right) \geq c\left( \frac{E\left( x,R\right) }{n}\right) ^{%
\frac{1}{\beta -1}}
\end{equation*}%
and similarly using $\left( \ref{betaprime}\right) $ that if $\beta ^{\prime
}>1$ that%
\begin{equation*}
\nu \left( x,n,R\right) \leq C\left( \frac{E\left( x,R\right) }{n}\right) ^{%
\frac{1}{\beta ^{\prime }-1}}.
\end{equation*}
\end{remark}

\begin{definition}
A function $h:\Gamma \rightarrow \mathbb{R}$ said to be \textit{harmonic} on
$A\subset \Gamma $ \ if it is defined on $\overline{A}$ and
\begin{equation*}
\sum_{y\in \Gamma }P\left( x,y\right) h\left( y\right) =h\left( x\right)
\text{ \ for all }x\in A.
\end{equation*}
\end{definition}

\begin{definition}
\label{EHI}We say that the weighted graph $(\Gamma ,\mu )$ satisfies $\left(
H\right) $ \emph{the elliptic Harnack inequality } if there is a constant $%
C>0$ such that for all $x\in \Gamma ,R>0$ and for any non-negative harmonic
function $u$ which is harmonic on $B(x,2R)$, the following inequality holds
\begin{equation*}
\max_{B\left( x,R\right) }u\leq C\min_{B\left( x,R\right) }u\,.
\end{equation*}
\end{definition}

\bigskip If the weights of the edges are\ considered as wires, the whole
graph can be seen as an electric network. \ Resistances are defined using
the usual capacity notion.

\begin{definition}
On $\left( \Gamma ,\mu \right) $ the Dirichlet form is defined as
\begin{equation*}
\mathcal{E}\left( f,f\right) =\sum_{y\sim z}\mu _{y,z}\left( f\left(
y\right) -f\left( z\right) \right) ^{2}
\end{equation*}
and the inner product is
\begin{equation*}
\left( f,f\right) =\sum_{y}f^{2}\left( x\right) \mu \left( x\right) .
\end{equation*}
\end{definition}

\begin{definition}
For any disjoint sets $A,B$ the capacity is defined via the Dirichlet form $%
\mathcal{E}$ by
\begin{equation*}
cap\left( A,B\right) =\inf \left\{ \mathcal{E}\left( f,f\right) :\
f|_{A}=1,f|_{B}=0\right\} .
\end{equation*}%
The resistance is defined then as
\begin{equation*}
\rho \left( A,B\right) =\frac{1}{cap\left( A,B\right) }.
\end{equation*}%
In particular we will use the following notations: for $R>r>0,x\in \Gamma $
\begin{equation*}
\rho \left( x,r,R\right) =\rho \left( B\left( x,r\right) ,B^{c}\left(
x,R\right) \right) .
\end{equation*}
\end{definition}

\section{Distribution of the exit time}

\label{stdist}In this section we prove the following theorem.

\begin{theorem}
\label{tptf}Assume that the weighted graph $\left( \Gamma ,\mu \right) $
satisfies $\left( p_{0}\right) $. \newline
1. If $\left( \overline{E}\right) $ holds , then there are \ $c,C>0$ such
that for all $n\geq R>0,x\in \Gamma $
\begin{equation*}
\mathbb{P}\left( T_{x,R}<n\right) \leq C\exp \left( -ck\left( x,n,R\right)
\right)
\end{equation*}%
is true.\newline
\newline
2. If $\left( \Gamma ,\mu \right) $ satisfies the elliptic Harnack
inequality $\left( H\right) $, then there are $c,C>0$ such that for all $%
n\geq R>0,x\in \Gamma $%
\begin{equation}
\mathbb{P}\left( T_{x,R}<n\right) \geq c\exp \left( -C\nu \left(
x,n,R\right) \right) .  \label{lb}
\end{equation}
\end{theorem}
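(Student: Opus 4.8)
The plan is to treat the two parts separately, since they rest on opposite inequalities. For Part 1 I would build the exit time out of successive shell crossings: partition $B(x,R)$ into $k = k(x,n,R)$ concentric annuli of width $\sim R/k$ and let $T_1, T_2, \ldots, T_k$ be the times needed to cross each successive annulus. The defining inequality of $k$ guarantees $n/k \leq q \min_{z} E(z, R/k)$, so every crossing has mean exit time at least $\sim n/(qk)$, while condition $(\overline{E})$ supplies the matching upper control on the maximal mean exit time needed for a uniform tail bound on each $T_i$. Since exiting $B(x,R)$ before time $n$ forces $\sum_{i=1}^{k} T_i < n$, with the total budget $n$ comparable to only a small multiple of $k$ times the per-shell mean, an exponential Chebyshev (Laplace-transform) estimate applied to the sum, using the strong Markov property at the successive crossing times, yields $\mathbb{P}(T_{x,R} < n) \leq C \exp(-ck)$.

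For Part 2, the main assertion, the strategy is a chaining argument in the reverse direction: instead of showing that exit is slow, I force the walk to race to the boundary along a favorable direction. First I would select the point $y \in S(x,2R)$ realizing the minimum in the definition of $\nu$, so that $\nu = l_9(x,y,n,R) =: l$, and mark checkpoints $x = z_0, z_1, \ldots, z_l = y$ along the shortest path $\pi_{x,y}$, spaced at distance $\sim 2R/l$. Reaching a neighborhood of $y$ certainly takes the walk out of $B(x,R)$, so it suffices to drive the walk successively through small neighborhoods of $z_0, z_1, \ldots, z_l$ within the allotted time. Splitting $[0,n]$ into $l$ intervals of length $n/l$, I would define the event $A_i$ that the walk sits in a ball of radius $\sim R/l$ about $z_i$ at the $i$-th checkpoint time, and estimate $\mathbb{P}(T_{x,R} < n) \geq \prod_{i=1}^{l} \mathbb{P}(A_i \mid A_{i-1})$ by the Markov property.

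The crux is the lower bound $\mathbb{P}(A_i \mid A_{i-1}) \geq p_1 > 0$ on a single hop, uniform in $i$. Here the time budget per hop is governed by the defining inequality of $l_9$, namely $n/l \geq Q \max_{z \in \pi_{x,y}} E(z, 9R/l)$, which says the walk has ample time, relative to the mean exit time from a ball of radius $\sim R/l$, to leave the current ball; a Markov-type argument then shows it does so before the checkpoint time with probability bounded below. The genuinely hard step is to show that the walk not merely exits but hits the \emph{next} checkpoint ball, in the correct direction, with probability bounded below — that is, to control a hitting probability, which is harder than to control an exit. This is exactly where the elliptic Harnack inequality $(H)$ enters, through the hitting-probability estimates developed in the key propositions: $(H)$ lets one compare the harmonic hitting profile of the target ball across the intermediate region and extract a positive lower bound independent of $i$ and of the scale. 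Once $\mathbb{P}(A_i \mid A_{i-1}) \geq p_1$ is established, multiplying over the $l = \nu$ hops gives $\mathbb{P}(T_{x,R} < n) \geq p_1^{\nu} = c \exp(-C\nu)$, as claimed.
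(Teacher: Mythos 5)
Your plan has the same skeleton as the paper's proof: Part 1 by successive annuli crossings and an exponential Chebyshev bound (the paper simply delegates this to Theorem 5.1 of \cite{Td}), and Part 2 by chaining along the shortest path towards the point $y\in S(x,2R)$ realizing $\nu$, with a uniform per-hop hitting estimate supplied by the Harnack inequality and multiplied over the $l=\nu$ hops (this is Lemma \ref{lchain} together with Propositions \ref{p2} and \ref{p3}). However, two steps are genuinely deficient as written. The first is your definition of $A_i$ as the event that the walk \emph{occupies} $B(z_i,R/l)$ at the fixed time $in/l$. By the defining inequality of $l_9$, the per-hop budget $n/l\geq Q\max_{z\in \pi_{x,y}}E\left( z,9R/l\right)$ is a large multiple of the mean exit time of the target ball, so a walk that has reached $B(z_i,R/l)$ has no reason to still be there at the prescribed instant; a constant lower bound for occupying a given small ball at a given late time is in essence a near-diagonal heat-kernel lower bound, which is precisely what is \emph{not} available under $(p_0)$ and $(H)$ alone --- the paper obtains such bounds only under the additional hypotheses $\left( \overline{E}\right) $ or very strong recurrence (Propositions \ref{pNDLE} and \ref{pmodc}, the latter even losing a factor $r^{-D}$), and Part 2 of the theorem assumes neither. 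The correct formulation, which your own ``crux'' paragraph implicitly switches to, is the paper's Lemma \ref{lchain}: let $\tau_i$ be the successive hitting times of the chained balls, bound the increments $s_{i}=\tau _{i}-\tau _{i-1}<m=n/l$, observe that on the intersection of these events $\tau _{y,r}<lm\leq n$ (which forces $T_{x,R}<n$), and obtain the product bound from the strong Markov property applied at the $\tau_i$, taking a minimum over the possible entry points $X_{\tau _{i-1}}\in \partial B_{i-1}$.

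The second gap is the per-hop constant itself. You correctly isolate it as the crux, and you correctly sketch the easy half: the Markov-inequality step $\mathbb{P}_{x}\left( T_{x,9r}\geq m\right) \leq E\left( x,9r\right) /m$, which is small because $m\geq QE\left( \cdot ,9R/l\right) $; this is Proposition \ref{p2}, and it is where the constant $9$ inside $l_{9}$ is consumed. But the hard half, the uniform bound $\mathbb{P}_{w}\left( \tau _{z,r}<T_{z,5r}\right) \geq c_{1}$ for all $w$ with $d\left( z,w\right) \leq 4r$, is left as an appeal to ``comparing the harmonic hitting profile,'' and this is not a one-line Harnack application: it is the central Lemma \ref{lptt>c} of the paper, proved by writing $u\left( w\right) =\mathbb{P}_{w}\left( \tau _{z,r}<T_{z,5r}\right) $ as a Green-function sum against the equilibrium (capacity) measure of $B\left( z,r\right) $, invoking Barlow's consequence of $(H)$ that $g^{B}\simeq \rho $ (Lemma \ref{lhg}) to convert Green functions into resistances, applying Harnack to $g^{B}\left( w,\cdot \right) $, and finally proving $\rho \left( z,4r,5r\right) \geq c\rho \left( z,r,5r\right) $ by iterating the Harnack inequality along a chain of balls on a ray from $\partial B\left( z,r\right) $ to $\partial B\left( z,4r\right) $. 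Without this argument (or an equivalent one) the uniform constant $p_{1}$, and with it the whole bound $c\exp \left( -C\nu \right) $, has no justification; your proposal is right in outline but is missing the paper's key lemma.
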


The proof of the upper bound was given in \cite{Td}. The lower bound is
based on a new chaining argument. \ First we need some propositions.

\begin{proposition}
\label{p1}Assume that the weighted graph $\left( \Gamma ,\mu \right) $
satisfies $\left( p_{0}\right) $ and $\left( \overline{E}\right) $, then
there is a $c>0$ such that for all $x\in \Gamma ,n,R>0$%
\begin{equation*}
\mathbb{P}\left( T_{x,R}>n\right) >c,
\end{equation*}%
if $n\leq \frac{1}{4}E\left( x,R\right) $.
\end{proposition}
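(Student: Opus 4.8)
The plan is to exploit the Markov property at the deterministic time $n$ together with the hypothesis $\left( \overline{E}\right) $ to convert the known size of the mean exit time into a lower bound on the survival probability $p:=\mathbb{P}\left( T_{x,R}>n\right) $. Note first that $\left( p_{0}\right) $ forces every vertex to have degree at most $1/p_{0}$, so $B\left( x,R\right) $ is finite and $E\left( x,R\right) $ is finite; moreover $E\left( x,R\right) \geq E\left( x,1\right) =1>0$, so the divisions by $E\left( x,R\right) $ below are legitimate. The starting point is the decomposition of the expected exit time according to whether the walk has left $B\left( x,R\right) $ by time $n$:
$$E\left( x,R\right) =\mathbb{E}\left( T_{x,R}\mid X_{0}=x\right) =\mathbb{E}\left( \min \left( T_{x,R},n\right) \mid X_{0}=x\right) +\mathbb{E}\left( \left( T_{x,R}-n\right) ^{+}\mid X_{0}=x\right) .$$
The first summand is at most $n$, so the whole argument reduces to controlling the overshoot term.

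For the overshoot I would use that on the event $\left\{ T_{x,R}>n\right\} $ the walk has not yet exited, hence $X_{n}\in B\left( x,R\right) $. Applying the Markov property at time $n$, the conditional expectation of the remaining time $T_{x,R}-n$ given $X_{n}$ equals $E_{X_{n}}\left( x,R\right) $, which by the definition of the maximal exit time is bounded by $\overline{E}\left( x,R\right) $. Thus
$$\mathbb{E}\left( \left( T_{x,R}-n\right) ^{+}\mid X_{0}=x\right) \leq \overline{E}\left( x,R\right) \,\mathbb{P}\left( T_{x,R}>n\right) \leq C\,E\left( x,R\right) \,p,$$
where the last inequality is exactly condition $\left( \overline{E}\right) $. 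Combining the two estimates gives $E\left( x,R\right) \leq n+C\,E\left( x,R\right) p$.

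Finally I would insert the hypothesis $n\leq \tfrac{1}{4}E\left( x,R\right) $ and divide by $E\left( x,R\right) >0$, obtaining $1\leq \tfrac{1}{4}+Cp$, that is $p\geq \tfrac{3}{4C}$; setting $c:=\tfrac{3}{4C}$ proves the claim. The single delicate point is the overshoot estimate: one must justify rigorously that, conditionally on $\left\{ T_{x,R}>n\right\} $, the law of $T_{x,R}-n$ is that of the exit time started from $X_{n}\in B\left( x,R\right) $, so that its conditional expectation is at most $\overline{E}\left( x,R\right) $. This is precisely where the Markov property and the definition $\overline{E}\left( x,R\right) =\max_{y\in B\left( x,R\right) }E_{y}\left( x,R\right) $ enter, and where $\left( \overline{E}\right) $ is indispensable for keeping the bound homogeneous in $E\left( x,R\right) $.
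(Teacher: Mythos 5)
Your proof is correct, and it is worth noting how it relates to the paper's. The paper disposes of this proposition in two lines by quoting Lemma 5.3 of the author's earlier work [Td], which asserts
\begin{equation*}
\mathbb{P}\left( T_{x,R}\leq n\right) \leq 1-\frac{E\left( x,R\right) }{2\overline{E}\left( x,R\right) }+\frac{n}{\overline{E}\left( x,R\right) },
\end{equation*}
and then combines this with $\left( \overline{E}\right) $ and $n\leq \frac{1}{4}E\left( x,R\right) $ to get $\mathbb{P}\left( T_{x,R}>n\right) \geq \frac{E-2n}{2\overline{E}}\geq \frac{1}{4C}$. You instead prove the needed tail bound from scratch: the decomposition $E\left( x,R\right) =\mathbb{E}\left( \min \left( T,n\right) \right) +\mathbb{E}\left( \left( T-n\right) ^{+}\right) $, the bound $\mathbb{E}\left( \min \left( T,n\right) \right) \leq n$, and the Markov property at time $n$ (using that $\left\{ T>n\right\} \in \mathcal{F}_{n}$ and $X_{n}\in B\left( x,R\right) $ on that event) give $\mathbb{E}\left( \left( T-n\right) ^{+}\right) \leq \overline{E}\left( x,R\right) \mathbb{P}\left( T>n\right) $, hence $\mathbb{P}\left( T>n\right) \geq \frac{E-n}{\overline{E}}\geq \frac{3}{4C}$. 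Your overshoot argument is rigorous as stated, and it actually yields a slightly sharper inequality than the cited lemma (no factor $2$ loss), with the marginally better constant $\frac{3}{4C}$ versus the paper's $\frac{1}{4C}$. What the paper's route buys is brevity by outsourcing the work to [Td]; what yours buys is self-containedness, and it makes visible exactly where $\left( \overline{E}\right) $ and the Markov property enter. Your preliminary remarks (finiteness of $E\left( x,R\right) $ from bounded degree under $\left( p_{0}\right) $, and $E\left( x,R\right) \geq 1$ justifying the division) are also sound.
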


\begin{proof}
From Lemma 5.3 of \cite{Td} one has for $A=B\left( x,R\right) $ that%
\begin{equation*}
\mathbb{P}\left( T_{x,R}\leq n\right) \leq 1-\frac{E\left( x,R\right) }{2%
\overline{E}\left( x,R\right) }+\frac{n}{\overline{E}\left( x,R\right) }.
\end{equation*}%
From the condition $\frac{\overline{E}\left( x,R\right) }{E\left( x,R\right)
}\leq C$ and $n\leq \frac{1}{4}E\left( x,R\right) $ one obtains%
\begin{equation*}
\mathbb{P}\left( T_{x,R}>n\right) \geq \frac{E\left( x,R\right) -2n}{2%
\overline{E}\left( x,R\right) }\geq \frac{1}{4C}.
\end{equation*}
\end{proof}

\begin{lemma}
\label{lhg}If $\left( \Gamma ,\mu \right) $ satisfies $\left( p_{0}\right) $
and the elliptic Harnack inequality $\left( H\right) $, then\ for $x\in
\Gamma ,r>0,K>L\geq 1,B=B\left( x,Kr\right) ,S=\left\{ y:d\left( x,y\right)
=Lr\right\} $%
\begin{equation}
\min\limits_{w\in S}g^{B}\left( w,x\right) \simeq \rho \left( x,Lr,Kr\right)
\simeq \max\limits_{v\in S}g^{B}\left( v,x\right) .  \label{ehg}
\end{equation}
\end{lemma}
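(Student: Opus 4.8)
The plan is to read the statement through the electric-network interpretation of the Green function. Write $u=g^{B}(\cdot ,x)$ for $B=B(x,Kr)$. Then $u\ge 0$ is harmonic on $B\setminus \{x\}$, vanishes on $B^{c}$, and is the potential produced by injecting a unit current at $x$ with $B^{c}$ grounded; in particular the total current flowing outward across $S=S(x,Lr)$ equals $1$. The heart of the matter is to show that $u$ is essentially constant on $S$ and that this constant is comparable to $\rho(x,Lr,Kr)$.

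First I would use the elliptic Harnack inequality to compare the values of $u$ on $S$. Since $L\ge 1$ and $L<K$, the sphere $S$ lies in the open annulus on which $u$ is a non-negative harmonic function, away from the pole $x$. Covering $S$ by a Harnack chain of balls contained in a slightly larger annulus (where $u$ is still harmonic and avoids $x$) and applying $\left( H\right) $ on each link yields $\max_{S}u\le C\min_{S}u$. Denote the common order of magnitude by $\ell$, so that $\min_{S}u\simeq \max_{S}u\simeq \ell$; this already supplies the outer two terms of $\left( \ref{ehg}\right) $ once the middle comparability is established.

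Next I would identify $\ell$ with $\rho(x,Lr,Kr)$ using the two variational characterisations of resistance on the annulus $A=B(x,Kr)\setminus B(x,Lr)$. Summation by parts, using harmonicity of $u$ inside $A$ and $u=0$ on the outer boundary, reduces the restricted Dirichlet energy $\mathcal{E}_{A}(u,u)$ to a boundary sum over $S$; since $u\simeq \ell$ there and the net outward current is $1$, this gives $\mathcal{E}_{A}(u,u)\simeq \ell$. The current field of $u$ is a unit flow from $B(x,Lr)$ to $B^{c}$, so Thomson's principle gives $\rho(x,Lr,Kr)\le \mathcal{E}_{A}(u,u)\simeq \ell$. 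Conversely, $\min (u/\ell ,1)$ is, up to a fixed constant, an admissible competitor for $cap\left( B(x,Lr),B^{c}(x,Kr)\right) $ (here one uses $u\ge \min_{S}u$ on $B(x,Lr)$ by the minimum principle), so Dirichlet's principle gives $1/\rho(x,Lr,Kr)=cap\left( B(x,Lr),B^{c}(x,Kr)\right) \lesssim \mathcal{E}_{A}(u,u)/\ell ^{2}\simeq 1/\ell $, i.e. $\rho(x,Lr,Kr)\gtrsim \ell $. Combining the two bounds yields $\ell \simeq \rho(x,Lr,Kr)$, which is exactly $\left( \ref{ehg}\right) $.

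The main obstacle is the Harnack step. Turning the ball-form inequality $\left( H\right) $ into an oscillation bound over an entire sphere, with a constant independent of $x,r,L,K$, is delicate precisely when the annulus is thin (that is, $K$ close to $L$) or when the chain must skirt the pole at $x$: one must scale the radii of the chain balls to the thickness of the annulus while keeping the number of links uniformly bounded. A secondary, purely discrete nuisance is the bookkeeping in the boundary-current identity: at individual vertices of $S$ there may be local back-flow, so the step $\mathcal{E}_{A}(u,u)\simeq \ell $ should be justified by controlling the total unsigned current $\sum_{z\in S}\left\vert j(z)\right\vert $ across $S$ rather than by assuming each local contribution $j(z)$ is non-negative, and the inner and outer discrete boundaries of $A$ must be handled with care.
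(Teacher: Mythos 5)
Your overall decomposition (sphere comparison of the Green function, then identification of the common value with the resistance) is the right way to think about the statement, but the step you yourself flag as "the main obstacle" is not a technical nuisance to be smoothed over --- it is precisely the point where the proposal breaks, and it cannot be repaired by any chaining argument. Under the paper's hypotheses, only $\left(p_{0}\right)$ and $\left(H\right)$, there is no geometric assumption (no volume doubling, no connectivity of annuli), and in general the sphere $S=S\left(x,Lr\right)$ simply cannot be covered by a Harnack chain of balls on which $g^{B}\left(\cdot,x\right)$ is harmonic. The obstruction is not only that the number of links blows up when $K$ is close to $L$; it is that the required chain may not exist at all. Take $\Gamma=\mathbb{Z}$ (or a star of three half-lines glued at $x$) with reasonable weights: $\left(p_{0}\right)$ and $\left(H\right)$ hold, $S\left(x,Lr\right)$ consists of two (or three) points lying in different components of $B\setminus\left\{x\right\}$, and every path joining them passes through the pole $x$, where $g^{B}\left(\cdot,x\right)$ is not harmonic, so the elliptic Harnack inequality can never be applied along such a path. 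Yet the conclusion of the lemma is true on these graphs (on $\mathbb{Z}$ it is an exact identity). This shows that the mechanism behind the lemma is genuinely different from angular chaining, and a proof built on "covering $S$ by a Harnack chain" is not completable in this generality. Radial chains (balls of radius comparable to their distance from $x$, strung along a shortest path from $x$ outward) do always exist and are legitimately used elsewhere in the paper, e.g.\ in the proof of Lemma \ref{lptt>c}; it is only the chaining around a sphere that is unavailable.

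For the record, the paper does not prove the lemma at all: it invokes Barlow's Proposition 2 from \cite{Bnew}, whose argument replaces chaining by potential theory --- for instance, for $y$ with $d\left(x,y\right)\geq 2\varrho$ the function $g^{B}\left(y,\cdot\right)$ \emph{is} harmonic on $B\left(x,2\varrho\right)$, so $\left(H\right)$ can be applied in the second variable, and this is combined with the equilibrium-measure decomposition and maximum-principle arguments. Your second step also has an acknowledged, unresolved issue (controlling gross back-flow across $S$ in the identity $\mathcal{E}_{A}\left(u,u\right)\simeq\ell$), but unlike the first gap this one is avoidable: writing the equilibrium potential of the capacitor $\left(B\left(x,Lr\right),B^{c}\left(x,Kr\right)\right)$ as $u\left(w\right)=\sum_{z}g^{B}\left(w,z\right)\pi\left(z\right)$ with $\pi$ of total mass $1/\rho\left(x,Lr,Kr\right)$ and evaluating at $w=x$, where $u\left(x\right)=1$, gives at once
\begin{equation*}
\min_{z\in\mathrm{supp}\,\pi}g^{B}\left(z,x\right)\leq\rho\left(x,Lr,Kr\right)\leq\max_{z\in\mathrm{supp}\,\pi}g^{B}\left(z,x\right),
\end{equation*}
with no flows or energy identities needed; this is exactly the device used inside the paper's proof of Lemma \ref{lptt>c}. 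That sandwich requires no Harnack inequality at all, which makes plain that the entire content of Lemma \ref{lhg} is the comparison $\max_{S}g^{B}\left(\cdot,x\right)\leq C\min_{S}g^{B}\left(\cdot,x\right)$ --- the very step your proposal does not prove.
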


\begin{proof}
See Barlow's proof (\cite{Bnew}, Proposition 2).
\end{proof}

\begin{lemma}
\label{lptt>c}If $\left( \Gamma ,\mu \right) $ satisfies $\left(
p_{0}\right) $ and the elliptic Harnack inequality $\left( H\right) $, then
there is a $c_{1}>0$ such that for all $x\in \Gamma ,r>0,w\in \overline{B}%
\left( x,4r\right) $%
\begin{equation}
\mathbb{P}_{w}\left( \tau _{x,r}<T_{x,5r}\right) >c_{1}.  \label{tt>c}
\end{equation}
\end{lemma}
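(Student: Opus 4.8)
The plan is to reduce the hitting probability to a comparison of Green functions on the large ball and then to invoke Lemma \ref{lhg} together with the elliptic Harnack inequality $\left( H\right) $ to control the resistances that appear. Write $A=B\left( x,r\right) $, $B=B\left( x,5r\right) $, and let $g^{B}$ denote the Green function of the walk killed on leaving $B$. If $w\in A$ the claim is immediate, since then $\tau _{x,r}=0$, so I may assume $r\leq d\left( w,x\right) \leq 4r+1$. Because $x\in A$, every visit to $x$ before time $T_{x,5r}$ forces the walk to enter $A$ beforehand, so the strong Markov property applied at $\tau _{x,r}$ gives
\begin{equation*}
g^{B}\left( w,x\right) =\mathbb{E}_{w}\left[ g^{B}\left( X_{\tau
_{x,r}},x\right) ;\tau _{x,r}<T_{x,5r}\right] .
\end{equation*}
Since a single step changes the graph distance by at most one, the first entrance point $X_{\tau _{x,r}}$ lies on the inner sphere $S\left( x,r-1\right) $, and therefore
\begin{equation*}
\mathbb{P}_{w}\left( \tau _{x,r}<T_{x,5r}\right) \geq \frac{g^{B}\left(
w,x\right) }{\max_{z\in S\left( x,r-1\right) }g^{B}\left( z,x\right) }.
\end{equation*}

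First I would estimate the numerator and denominator by resistances. Applying Lemma \ref{lhg} with $K=5$ and $L=d\left( w,x\right) /r\in \left[ 1,4\right] $ gives $g^{B}\left( w,x\right) \simeq \rho \left( x,d\left( w,x\right) ,5r\right) $, while the same lemma with $L\approx 1$ gives $\max_{z\in S\left( x,r-1\right) }g^{B}\left( z,x\right) \simeq \rho \left( x,r,5r\right) $. It then remains to show $\rho \left( x,d\left( w,x\right) ,5r\right) \simeq \rho \left( x,r,5r\right) $ uniformly for $r\leq d\left( w,x\right) \leq 4r+1$; equivalently, that the resistances $\rho \left( x,Lr,5r\right) $ for $1\leq L\leq 4$ are all comparable.

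The main step is this resistance comparison, and here $\left( H\right) $ does the work. The function $g^{B}\left( \cdot ,x\right) $ is nonnegative and harmonic on the annulus $B\left( x,5r\right) \setminus \overline{B}\left( x,r/2\right) $, and by Lemma \ref{lhg} its restriction to each sphere $S\left( x,Lr\right) $, $1\leq L<5$, is comparable to $\rho \left( x,Lr,5r\right) $. I would fix a radial path from a point of $S\left( x,r\right) $ to a point of $S\left( x,4r\right) $ and cover it by a number of balls of radius $\simeq r/10$ that is bounded independently of $r$, each such ball doubled lying inside the annulus of harmonicity; repeated application of $\left( H\right) $ then yields $g^{B}$ comparable at the two ends of the chain, whence $\rho \left( x,r,5r\right) \simeq \rho \left( x,4r,5r\right) $, and likewise for the intermediate radii. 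Combining the two displays gives $\mathbb{P}_{w}\left( \tau _{x,r}<T_{x,5r}\right) \geq c_{1}$, with $c_{1}>0$ absorbing the finitely many Harnack constants.

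The main obstacle is handling the denominator correctly: it is essential that the walk first enters $A$ on the inner sphere $S\left( x,r-1\right) $ rather than deep inside $A$, because $\max_{z\in A}g^{B}\left( z,x\right) \simeq \rho \left( x,1,5r\right) $ can be far larger than $\rho \left( x,r,5r\right) $ (already on $\mathbb{Z}^{3}$ their ratio grows like $r$), and using it would destroy the bound. The second delicate point is to keep the Harnack chain used for the resistance comparison inside the region where $g^{B}\left( \cdot ,x\right) $ is harmonic, avoiding the pole at $x$ and the boundary of $B$, while using only a bounded number of steps, so that the accumulated constant does not depend on $r$.
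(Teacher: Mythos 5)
Your proposal is correct, and its core is the same as the paper's: both arguments rest on Lemma \ref{lhg} to identify Green function values on spheres with resistances, and on a bounded Harnack chain of balls of radius comparable to $r$ along a radial path (doubled balls avoiding the pole $x$ and staying inside $B\left( x,5r\right) $) to obtain the crucial comparison $\rho \left( x,4r,5r\right) \geq c\rho \left( x,r,5r\right) $. Where you genuinely differ is the initial reduction. The paper represents $u\left( w\right) =\mathbb{P}_{w}\left( \tau _{x,r}<T_{x,5r}\right) $ as a capacity potential, $u\left( w\right) =\sum_{z}g^{B}\left( w,z\right) \pi \left( z\right) $, with the equilibrium measure $\pi $ supported on $S\left( x,r\right) $ of total mass $1/\rho \left( x,r,5r\right) $, and then applies the Harnack inequality to $g^{B}\left( w,\cdot \right) $ in $B\left( x,2r\right) $ to replace $g^{B}\left( w,z\right) $ for $z$ near $x$ by $g^{B}\left( w,x\right) $; the normalization of $\pi $ hands it the denominator $\rho \left( x,r,5r\right) $ for free. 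You instead use the first-entrance (strong Markov) identity $g^{B}\left( w,x\right) =\mathbb{E}_{w}\left[ g^{B}\left( X_{\tau _{x,r}},x\right) ;\tau _{x,r}<T_{x,5r}\right] $, which yields $\mathbb{P}_{w}\left( \tau _{x,r}<T_{x,5r}\right) \geq g^{B}\left( w,x\right) /\max_{S\left( x,r-1\right) }g^{B}\left( \cdot ,x\right) $ with no equilibrium measure and no Harnack step in the second variable; this is somewhat more elementary. The price is that your denominator sits on the sphere $S\left( x,r-1\right) $, where Lemma \ref{lhg} as stated (it requires $K>L\geq 1$, so $Lr\geq r$) does not literally apply; you must either reparametrize (inner radius $r-1$, $K=5r/\left( r-1\right) $) or start the Harnack chain on $S\left( x,r-1\right) $ itself, both of which work since the doubled balls of the chain still avoid $x$ when $r\geq 2$ --- a small repair, not a gap, and you essentially flag it yourself. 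Finally, both your argument and the paper's degenerate for small $r$, where balls of radius $r/10$ (or the paper's $r/4$) are trivial; there $\left( p_{0}\right) $ gives $\mathbb{P}_{w}\left( \tau _{x,r}<T_{x,5r}\right) \geq p_{0}^{Cr}\geq c$ outright, so nothing is lost.
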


\begin{proof}
The investigated probability
\begin{equation}
u\left( w\right) =\mathbb{P}_{w}\left( \tau _{x,r}<T_{x,5r}\right)
\end{equation}%
is the capacity potential between $\Gamma \backslash B\left( x,5r\right) $
and $B\left( x,r\right) $ and clearly harmonic in $A=B\left( x,5r\right)
\backslash B\left( x,r\right) $. \ Write $B=B\left( x,5r\right) .$ So it can
be as usual decomposed
\begin{equation*}
u\left( w\right) =\sum_{z}g^{B}\left( w,z\right) \pi \left( z\right)
\end{equation*}%
with the proper capacity measure $\pi \left( z\right) $ with support in $%
S\left( x,r\right) $, $\pi \left( A\right) =1/\rho \left( x,r,5r\right) $. \
From the maximum (minimum) principle it follows that the minimum of $u\left(
w\right) $ is attained on the boundary, $w\in S\left( x,4r-1\right) $ and
from the Harnack inequality for $g^{B}\left( w,.\right) $ in $B\left(
x,2r\right) $ that
\begin{equation*}
\min_{z\in \overline{B}\left( x,r+1\right) }g^{B}\left( w,z\right) \geq
cg^{B}\left( w,x\right) ,
\end{equation*}%
\begin{equation*}
u\left( w\right) =\sum_{z}g^{B}\left( w,z\right) \pi \left( z\right) \geq
\frac{cg^{B}\left( w,x\right) }{\rho \left( x,r,5r\right) }.
\end{equation*}%
From Lemma \ref{lhg} we know that
\begin{equation*}
\max_{y\in B\left( x,5r\right) \backslash B\left( x,4r\right) }g^{B}\left(
y,x\right) \simeq \rho \left( x,4r,5r\right) \simeq \min_{w\in B\left(
x,4r\right) }g^{B}\left( w,x\right) .
\end{equation*}%
which means that
\begin{equation}
u\left( w\right) \geq c\frac{\rho \left( x,4r,5r\right) }{\rho \left(
x,r,5r\right) }.  \label{ubig}
\end{equation}%
Similarly from Lemma \ref{lhg} it follows that
\begin{equation*}
\max_{v\in B\left( x,5r\right) \backslash B\left( x,r\right) }g^{B}\left(
v,x\right) \simeq \rho \left( x,r,5r\right) \simeq \min_{w\in B\left(
x,r\right) }g^{B}\left( w,x\right) .
\end{equation*}%
Finally if $y_{0}\in \partial B\left( x,r\right) $ is on the ray from $x$ to
$y\in \partial B\left( x,4r\right) $ then iterating the Harnack inequality
along a finite chain of balls of radius $r/4$ along this ray from $y_{0}$ to
$y$ one obtains%
\begin{equation*}
g^{B}\left( y,x\right) \simeq g^{B}\left( y_{0},x\right) ,
\end{equation*}%
which results that%
\begin{equation*}
\rho \left( x,4r,5r\right) \geq c\rho \left( x,r,5r\right) ,
\end{equation*}%
and the statement follows from $\left( \ref{ubig}\right) $.
\end{proof}

\begin{proposition}
\label{p2}Assume that the weighted graph $\left( \Gamma ,\mu \right) $
satisfies $\left( p_{0}\right) $ and $\left( H\right) $. Then there are $%
c_{0},c_{1}>0$ such that for all $x,z\in \Gamma ,r>0,d\left( x,z\right) \leq
4r,m>\frac{2}{c_{1}}E\left( x,9r\right) $
\begin{equation*}
\mathbb{P}_{x}\left( \tau _{z,r}<m\right) >c_{0}.
\end{equation*}
\end{proposition}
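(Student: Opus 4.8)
The plan is to reduce the statement to Lemma \ref{lptt>c} together with a crude first-moment (Markov) bound on the exit time, so that nothing beyond $\left( p_{0}\right) $ and $\left( H\right) $ is needed.

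First I would apply Lemma \ref{lptt>c} with the roles of the two centres interchanged, taking $z$ as the centre of the balls. Since $d\left( x,z\right) \le 4r$ we have $x\in \overline{B}\left( z,4r\right) $, so the lemma yields
\begin{equation*}
\mathbb{P}_x\left( \tau _{z,r}<T_{z,5r}\right) >c_{1}
\end{equation*}
with the same constant $c_{1}$ appearing in the statement. This already controls the probability of hitting $B\left( z,r\right) $, but only before the walk exits $B\left( z,5r\right) $; the real task is to replace the random time $T_{z,5r}$ by the deterministic time $m$.

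Next I would estimate how long the walk can remain in $B\left( z,5r\right) $. By the triangle inequality $B\left( z,5r\right) \subseteq B\left( x,9r\right) $, hence $T_{z,5r}\le T_{x,9r}$ pathwise for a walk started at $x\in B\left( z,5r\right) $, and therefore $\mathbb{E}_x\left( T_{z,5r}\right) \le E\left( x,9r\right) $. Markov's inequality then gives
\begin{equation*}
\mathbb{P}_x\left( T_{z,5r}>m\right) \le \frac{E\left( x,9r\right) }{m}<\frac{c_{1}}{2},
\end{equation*}
where the last inequality is exactly the hypothesis $m>\frac{2}{c_{1}}E\left( x,9r\right) $. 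I would emphasise that routing the bound through $B\left( x,9r\right) $, rather than through $\overline{E}\left( z,5r\right) $, is what lets the proof avoid invoking $\left( \overline{E}\right) $.

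Finally I would combine the two estimates. On the event $\{\tau _{z,r}<T_{z,5r}\}$ one has either $\tau _{z,r}<m$ or else $T_{z,5r}>\tau _{z,r}\ge m$, so that
\begin{equation*}
\{\tau _{z,r}<T_{z,5r}\}\subseteq \{\tau _{z,r}<m\}\cup \{T_{z,5r}>m\}.
\end{equation*}
Taking probabilities and rearranging,
\begin{equation*}
\mathbb{P}_x\left( \tau _{z,r}<m\right) \ge \mathbb{P}_x\left( \tau _{z,r}<T_{z,5r}\right) -\mathbb{P}_x\left( T_{z,5r}>m\right) >c_{1}-\frac{c_{1}}{2}=\frac{c_{1}}{2},
\end{equation*}
so the claim holds with $c_{0}=c_{1}/2$. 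I do not expect any serious obstacle: the entire analytic content sits in Lemma \ref{lptt>c}, and the only points demanding care are verifying $x\in \overline{B}\left( z,4r\right) $ so the lemma applies with centre $z$, and the inclusion $B\left( z,5r\right) \subseteq B\left( x,9r\right) $ that ties the exit-time bound to $E\left( x,9r\right) $.
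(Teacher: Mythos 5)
Your proof is correct and is essentially the paper's own argument: both rest on Lemma \ref{lptt>c} applied with centre $z$ (valid since $d(x,z)\leq 4r$), the inclusion $B\left( z,5r\right) \subset B\left( x,9r\right) $, Markov's inequality combined with the hypothesis $m>\frac{2}{c_{1}}E\left( x,9r\right) $, and the conclusion $c_{0}=c_{1}/2$. The only (cosmetic) difference is where the comparison of stopping times is used: the paper bounds $\mathbb{P}_{x}\left( \tau _{z,r}<T_{x,9r}\right) $ from below by $\mathbb{P}_{x}\left( \tau _{z,r}<T_{z,5r}\right) $ and applies Markov to $T_{x,9r}$, whereas you keep $T_{z,5r}$ in the hitting event and instead transfer the Markov bound via $T_{z,5r}\leq T_{x,9r}$.
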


\begin{proof}
We start with the following simple estimate:%
\begin{eqnarray*}
\mathbb{P}_{x}\left( \tau _{z,r}<m\right) &\geq &\mathbb{P}_{x}\left( \tau
_{z,r}<T_{x,9r}<m\right) \\
&=&\mathbb{P}_{x}\left( \tau _{z,r}<T_{x,9r}\right) -\mathbb{P}_{x}\left(
\tau _{z,r}<T_{x,9r},T_{x,9r}\geq m\right) \\
&\geq &\mathbb{P}_{x}\left( \tau _{z,r}<T_{x,9r}\right) -\mathbb{P}%
_{x}\left( T_{x,9r}\geq m\right) .
\end{eqnarray*}%
On one hand
\begin{equation*}
\mathbb{P}_{x}\left( T_{x,9r}\geq m\right) \leq \frac{E\left( x,9r\right) }{m%
}\leq \frac{E\left( x,9r\right) }{\frac{2}{c_{1}}E\left( x,9r\right) }%
<c_{1}/2
\end{equation*}%
and on the other hand $B\left( z,5r\right) \subset B\left( x,9r\right) ,$
hence
\begin{equation*}
\mathbb{P}_{x}\left( \tau _{z,r}<T_{x,9r}\right) \geq \mathbb{P}_{x}\left(
\tau _{z,r}<T_{z,5r}\right) ,
\end{equation*}%
and Lemma \ref{lptt>c} can be applied to get
\begin{equation*}
\mathbb{P}_{x}\left( \tau _{z,r}<T_{z,5r}\right) \geq c_{1}.
\end{equation*}%
The result follows with $c_{0}=c_{1}/2.$
\end{proof}

\begin{lemma}
\label{lchain}Let us assume that $x\in \Gamma ,m,r,l\geq 1,$ $0\leq u\leq
3l-2,$ $r=\left( 3l-2\right) r-u,y\in S\left( x,r+r\right) $ and write $n=ml$%
, then
\begin{equation*}
\mathbb{P}_{x}\left( \tau _{y,r}<n\right) \geq \min\limits_{w\in \pi
_{x,y},2r-3\leq d\left( z,w\right) \leq 4r}\mathbb{P}_{z}^{l}\left( \tau
_{w,r}<m\right) .
\end{equation*}%
where $\pi _{x,y}$ is the union of vertices of all possible shortest paths
from $x$ to $y$.
\end{lemma}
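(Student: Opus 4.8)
The plan is to decompose the single long-range event $\{\tau_{y,r}<n\}$ into a chain of $l$ short-range hits, one for each factor in $n=ml$, and then to strip off the factors one at a time with the strong Markov property. First I would fix a shortest path from $x$ to $y$ and select along it a sequence of reference vertices $w_{0}=x,w_{1},\dots ,w_{l}=y$ lying in $\pi _{x,y}$, spaced so that consecutive centres are at graph distance close to $3r$; the stated relation between $d\left( x,y\right) ,l,r$ and $u$ (with $0\leq u\leq 3l-2$) is exactly what lets the geodesic be cut into $l$ such steps, the parameter $u$ absorbing the integer rounding so that every step length stays in the admissible window $\left[ 3r-3,3r\right] $. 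The purpose of this spacing is the following elementary bookkeeping: once the walk has entered $B\left( w_{i-1},r\right) $, so that its position $z$ satisfies $d\left( z,w_{i-1}\right) <r$, the triangle inequality gives $2r-3\leq d\left( w_{i-1},w_{i}\right) -r<d\left( z,w_{i}\right) <d\left( w_{i-1},w_{i}\right) +r\leq 4r$, so every landing configuration that actually occurs in the chain falls into the index set over which the minimum on the right-hand side is taken; in particular the consecutive balls are disjoint, so no step is vacuous.

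Next I would introduce the successive hitting times. Set $\sigma _{0}=0$ and, for $1\leq i\leq l$, let $\sigma _{i}$ be the first time after $\sigma _{i-1}$ at which the walk lies in $B\left( w_{i},r\right) $. Consider the event
\[
A=\bigcap_{i=1}^{l}\left\{ \sigma _{i}-\sigma _{i-1}<m\right\} .
\]
On $A$ the walk has reached $B\left( w_{l},r\right) =B\left( y,r\right) $ by time $\sigma _{l}<ml=n$, hence $A\subseteq \left\{ \tau _{y,r}<n\right\} $ and it suffices to bound $\mathbb{P}_{x}\left( A\right) $ from below.

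Finally I would apply the strong Markov property at each stopping time $\sigma _{i-1}$. Conditioning on $X_{\sigma _{i-1}}=z\in B\left( w_{i-1},r\right) $, the conditional probability that the walk enters $B\left( w_{i},r\right) $ within the next $m$ steps is precisely the fresh hitting probability $\mathbb{P}_{z}\left( \tau _{w_{i},r}<m\right) $, which by the distance bookkeeping of the first paragraph is at least $p_{\ast }:=\min_{w\in \pi _{x,y},\ 2r-3\leq d\left( z,w\right) \leq 4r}\mathbb{P}_{z}\left( \tau _{w,r}<m\right) $. Iterating this estimate over $i=1,\dots ,l$ yields $\mathbb{P}_{x}\left( A\right) \geq p_{\ast }^{\,l}$, and since $t\mapsto t^{l}$ is monotone on $\left[ 0,1\right] $ this equals the asserted right-hand side, where $\mathbb{P}_{z}^{l}$ denotes the $l$-th power. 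The probabilistic core is routine once the chain is in place, and I would stress that this lemma is the chaining inequality only — no lower bound such as Proposition \ref{p2} is invoked here to control the single-step factor. The one genuinely delicate point is the geometric setup of the first paragraph: arranging exactly $l$ intermediate centres whose integer spacing keeps every landing configuration inside the narrow window $2r-3\leq d\left( z,w\right) \leq 4r$ while covering the whole distance to $B\left( y,r\right) $, which is where the precise relation among $d\left( x,y\right) ,l,r$ and $u$ is consumed.
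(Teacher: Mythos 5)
Your proof is correct and follows essentially the same route as the paper's: a chain of balls with centres $w_{0}=x,\dots ,w_{l}=y$ on $\pi _{x,y}$ spaced roughly $3r$ apart, the event that each consecutive ball is hit within $m$ steps, and the (strong) Markov property iterated $l$ times to peel off one factor per step, each bounded below by the minimum over the window $2r-3\leq d\left( z,w\right) \leq 4r$. Your explicit triangle-inequality bookkeeping in fact cleans up the spacing condition that the paper states only implicitly (and with typos), but the decomposition and the iteration are the same.
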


\begin{remark}
The statement (and its consequences) can be sharpened if we consider
separately all possible paths of comparable length to the shortest one and
consider the minimum over the vertices of each path than the maximum for the
paths. \ We omit this refinement here.
\end{remark}

\begin{proof}
We define a chain of balls . \ For $1\leq l\leq d\left(
x,y\right) -r$ let us consider a sequence of vertices $%
x_{0}=x,x_{1},...x_{l}=y,x_{i}\in \pi _{x,y}$ in the following way: $d\left(
x_{i-1},x_{i}\right) =r-\delta _{i},$ where $\delta _{i}\in \left\{
0,1,2,3\right\} $ for $i=1...l$ and
\begin{equation*}
u=\sum_{i=1}^{l}\delta _{i}
\end{equation*}%
\begin{equation*}
R=\left( 3l-2\right) r-\sum_{i=1}^{l}\delta _{i}=\left( 3l-2\right) r-u.
\end{equation*}%

Let $\tau _{i}=\tau _{x_{i},r}$ and $%
s_{i}=\tau _{i}-\tau _{i-1},A_{i}=\left\{ s_{i}<m\right\} ,\mathbb{A}%
_{i}=\cap _{j=1}^{i}A_{j}$ for $i=1,...l,\tau _{0}=0.$ Let us use the
notation $D_{i}\left( z_{i}\right) =A_{i}\cap \left\{ X_{\tau
_{i}}=z_{i}\right\} .$ \ One can observe that $\cap _{i=1}^{l}A_{i}$ means
that the walk takes less than $m$ steps between the first hit of the
consecutive $B_{i}=B\left( x_{i},r\right) $ balls, consequently
\begin{equation*}
\mathbb{P}_{x}\left( \tau _{y,r}<n\right) \geq \mathbb{P}_{x}\left( \mathbb{A%
}_{l}\right)
\end{equation*}%
We also note that $s_{i}=\min \left\{ k:X_{k}\in B_{i}|X_{0}\in \partial
B_{i-1}\right\} $. From this one obtains the following estimates denoting $%
z_{0}=x$
\begin{eqnarray*}
\mathbb{P}_{x}\left( \tau _{y,r}<n\right) &\geq &\mathbb{P}_{x}\left(
\mathbb{A}_{l}\right) \\
&=&\sum_{z_{l-1}\in \partial B_{l-1}}\mathbb{P}_{x}\left[ \mathbb{A}%
_{l-2}\cap D_{l-1}\left( z_{l-1}\right) \cap A_{l}\right]
\end{eqnarray*}%
Now we use the Markov property.%
\begin{eqnarray*}
&&\sum_{z_{l-1}\in \partial B_{l-1}}\mathbb{P}_{x}\left[ \mathbb{A}%
_{l-2}\cap D_{l-1}\left( z_{l-1}\right) \cap A_{l}\right] \\
&=&\sum_{z_{l-1}\in \partial B_{l-1}}\mathbb{P}_{x}\left[ A_{l}|\mathbb{A}%
_{l-2}\cap D_{l-1}\left( z_{l-1}\right) \right] \mathbb{P}_{x}\left[ \mathbb{%
A}_{l-2}\cap D_{l-1}\left( z_{l-1}\right) \right] \\
&=&\sum_{z_{l-1}\in \partial B_{l-1}}\mathbb{P}_{z_{l-1}}\left(
s_{l}<m\right) \mathbb{P}_{x}\left( \mathbb{A}_{l-2}\cap D_{l-1}\left(
z_{l-1}\right) \right) \\
&\geq &\min\limits_{w\in \pi _{x,y},2r-3\leq d\left( z,w\right) \leq 4r}%
\mathbb{P}_{z}\left( \tau _{w,r}<m\right) \mathbb{P}\left( \mathbb{A}%
_{l-1}\right) ,
\end{eqnarray*}%
Denoting $q=\min\limits_{w\in \pi _{x,y},2r-3\leq d\left( z,w\right) \leq 4r}%
\mathbb{P}_{z}\left( \tau _{w,r}<m\right) $ we have%
\begin{equation*}
\mathbb{P}\left( \mathbb{A}_{l}\right) \geq q\mathbb{P}\left( \mathbb{A}%
_{l-1}\right)
\end{equation*}%
then iterating this expression gives the result.
\end{proof}

Now we can prove the main ingredient of this section, which helps to control
the probability of hitting a nearby ball.

\begin{proposition}
\label{p3}Assume that the weighted graph $\left( \Gamma ,\mu \right) $
satisfies $\left( p_{0}\right) $ and the elliptic Harnack inequality $\left(
H\right) $. Then there are $c,C,C^{\prime }>0$ such that for all $x,y\in
\Gamma ,r\geq 1,$ $n>d\left( x,y\right) -r,d\left( x,y\right) \leq 4r$%
\begin{equation*}
\mathbb{P}_{x}\left( \tau _{y,r}<n\right) \geq c\exp \left[ -C^{\prime
}l_{C}\left( x,y,n,d\left( x,y\right) -r\right) \right] .
\end{equation*}
\end{proposition}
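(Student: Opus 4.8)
The plan is to run a chain of small balls from $x$ into $B(y,r)$ and combine the resulting product bound of Lemma \ref{lchain} with the uniform single-step estimate of Proposition \ref{p2}. Write $R=d\left(x,y\right)-r$ (if $d(x,y)<r$ then $x\in B(y,r)$ and the probability is $1$, so assume $R\geq 0$), and set $l=l_{C}\left(x,y,n,R\right)$ for a constant $C$ fixed below. I choose the chain radius $\rho$ to be essentially $R/(3l)$, so that $l$ balls of radius $\rho$ sweep out a distance $(3l-2)\rho\approx R$; using the slack $u$ in Lemma \ref{lchain} I arrange a chain $x=x_{0},x_{1},\dots,x_{l}=y'$ along $\pi_{x,y}$ whose endpoint $y'$ satisfies $B(y',\rho)\subset B(y,r)$. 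Since the smaller target forces the larger one, $\left\{\tau_{y',\rho}<n\right\}\subset\left\{\tau_{y,r}<n\right\}$, and Lemma \ref{lchain} with $m=\lfloor n/l\rfloor$, $n=ml$ gives
\begin{equation*}
\mathbb{P}_{x}\left(\tau_{y,r}<n\right)\geq\mathbb{P}_{x}\left(\tau_{y',\rho}<n\right)\geq q^{l},\qquad q=\min\mathbb{P}_{z}\left(\tau_{w,\rho}<m\right),
\end{equation*}
the minimum being over $w\in\pi_{x,y}$ and $z$ with $2\rho-3\leq d\left(z,w\right)\leq 4\rho$.

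It remains to bound $q$ below by an absolute constant. For each admissible pair one has $d\left(z,w\right)\leq 4\rho$, so Proposition \ref{p2} yields $\mathbb{P}_{z}\left(\tau_{w,\rho}<m\right)>c_{0}$ as soon as $m>\frac{2}{c_{1}}E\left(z,9\rho\right)$. This is exactly what the definition of $l_{C}$ is designed to supply: I fix $C$ so that $\frac{CR}{l}=9\rho$ (which forces $C=3$ with $\rho=R/(3l)$) and take the fixed constant $Q$ in the definition of $l_{C}$ at least $\frac{2}{c_{1}}$ times the comparison constant $\kappa$ appearing below. Then the defining inequality $\frac{n}{l}\geq Q\max_{w\in\pi_{x,y}}E\left(w,\frac{CR}{l}\right)$ reads $m\geq Q\max_{w}E\left(w,9\rho\right)$. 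Because each $z$ lies within $4\rho$ of some $w\in\pi_{x,y}$, one has $B\left(z,9\rho\right)\subset B\left(w,13\rho\right)$ and hence $E\left(z,9\rho\right)\leq\overline{E}\left(w,13\rho\right)$; bounding $\overline{E}\left(w,13\rho\right)\leq\kappa E\left(w,9\rho\right)$ via $\left(\overline{E}\right)$ and the doubling property $\left(\ref{beta}\right)$ (both available under $\left(H\right)$, cf. Lemma \ref{lhg}) gives $m\geq\frac{2}{c_{1}}E\left(z,9\rho\right)$. Thus $q\geq c_{0}$ and
\begin{equation*}
\mathbb{P}_{x}\left(\tau_{y,r}<n\right)\geq c_{0}^{\,l}=\exp\left(-C'\,l_{C}\left(x,y,n,d\left(x,y\right)-r\right)\right),\qquad C'=\ln\left(1/c_{0}\right)>0.
\end{equation*}

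The step I expect to be the crux is the simultaneous calibration of the three parameters $l,\rho,m$. The index $l$ is prescribed analytically through the inverse-type quantity $l_{C}$, whereas the chain length is a purely geometric count of how many balls of radius $\rho$ tile a segment of length $R$, and at the same time $m=n/l$ must be large enough to trigger Proposition \ref{p2}. Forcing all three to agree is what pins down the relation $\frac{CR}{l}=9\rho$ and the admissible range of $Q$, and it is the minimality built into $l_{C}$ that guarantees $m$ is never too small. The rest is bookkeeping: $\rho$ must be a positive integer and the final ball must genuinely sit inside the open ball $B(y,r)$, which is handled by the slack $u$ and rounding; and the degenerate regimes must be disposed of separately — the case $l=1$ by a single application of Proposition \ref{p2} with $m=n$, and the default value $l=R$ (occurring only for $n$ so small that no admissible $l$ exists) by the trivial bound. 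The leading constant $c$ in the statement is present precisely to absorb these boundary cases.
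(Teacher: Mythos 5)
Your overall architecture coincides with the paper's: dispose of the degenerate cases, chain along $\pi _{x,y}$ via Lemma \ref{lchain}, bound each step uniformly by Proposition \ref{p2}, and calibrate the constant $C$ in $l_{C}$ so that the minimality built into $l_{C}$ supplies exactly the time threshold Proposition \ref{p2} needs. You have also put your finger on the genuine crux: Proposition \ref{p2} demands $m>\frac{2}{c_{1}}E\left( z,9\rho \right) $ at the \emph{starting} points $z$ of the chain steps, which lie off $\pi _{x,y}$, while $l_{C}$ only controls $E$ along $\pi _{x,y}$. The problem is your resolution of this crux. You bridge it by $E\left( z,9\rho \right) \leq \overline{E}\left( w,13\rho \right) \leq \kappa E\left( w,9\rho \right) $, invoking $\left( \overline{E}\right) $ and the doubling property $\left( \ref{beta}\right) $ and asserting that both are ``available under $\left( H\right) $, cf.\ Lemma \ref{lhg}''. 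That assertion is unfounded: Lemma \ref{lhg} compares Green functions with resistances and says nothing about mean exit times; in this paper $\left( \overline{E}\right) $ is an \emph{independent} hypothesis (it is assumed separately in part 1 of Theorem \ref{tptf}, in Proposition \ref{p1}, and again in Theorems \ref{tsGE} and \ref{tLE}), and $\left( \ref{beta}\right) $ is derived from the time comparison principle $\left( \ref{TC}\right) $, not from $\left( H\right) $. Even $\left( \overline{E}\right) $ alone would not rescue the step: by Remark \ref{rEbar} it yields only the lower doubling $\left( \ref{betaprime}\right) $, whereas $\overline{E}\left( w,13\rho \right) \leq \kappa E\left( w,9\rho \right) $ also needs the upper doubling $\left( \ref{beta}\right) $. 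Since the entire point of Proposition \ref{p3} (and of part 2 of Theorem \ref{tptf}) is that the bound follows from $\left( p_{0}\right) $ and $\left( H\right) $ alone, a proof that imports $\left( \overline{E}\right) $ and time doubling proves a different, strictly weaker statement.

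The gap is created by your calibration $\frac{CR}{l}=9\rho $, i.e.\ $C=3$, which leaves no slack and forces a comparison of exit times at different base points and radii. The paper's device is to take $C=9$ and keep the chain radius $r=\frac{R+u}{3l-2}$ strictly smaller than $R/l$: the elementary estimate $9r\leq 10\left( r-1\right) \leq 10\frac{R+2}{3l-2}\leq \frac{4R}{l-1}<9\frac{R}{l}$ (valid in the main case $r\geq 10$, $l>1$) shows that the defining inequality of $l_{9}$, namely $\frac{n}{l}\geq Q\max_{w\in \pi _{x,y}}E\left( w,\frac{9R}{l}\right) $, already dominates $QE\left( w,9r\right) $ by nothing more than monotonicity of $E\left( w,\cdot \right) $ in the radius --- no $\left( \overline{E}\right) $, no doubling enters anywhere. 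The same defect appears in your $l=1$ case: Proposition \ref{p2} wants $n>\frac{2}{c_{1}}E\left( x,9R\right) $, but $l_{3}=1$ only guarantees $n\geq QE\left( x,3R\right) $, and bridging $E\left( x,9R\right) \leq CE\left( x,3R\right) $ is again doubling. Finally, the small-radius regime (the paper's case $r\leq 9$), where the integer ball geometry degenerates, is handled in the paper by the trivial $\left( p_{0}\right) $ bound along a shortest path, costing $\exp \left( -27\log \left( 1/p_{0}\right) l\right) $; this loss is exponential in $l$ and goes into $C^{\prime }$, so it cannot be ``absorbed into the leading constant $c$'' as you propose. The repair is not to assume exit-time regularity but to rebuild the slack: either adopt the paper's factor-$9$ monotonicity trick, or (if one insists on policing the off-path starting points, about which the paper itself is terse) widen the maximum in the definition of $l_{C}$ to a neighborhood of $\pi _{x,y}$ --- a change of bookkeeping, not of hypotheses.
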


\begin{proof}
\ If\textbf{\ }$n>\frac{2}{c_{1}}E\left( x,9R\right) $, then the statement
follows from Proposition \ref{p2}. Also if $r\leq 9$, then $\frac{R}{3r}\leq
l\leq R,$ so from $\left( p_{0}\right) $ the trivial lower estimate
\begin{equation*}
\mathbb{P}_{x}\left( \tau _{y,r}<n\right) \geq c\exp \left( -27\left( \log
\frac{1}{p_{0}}\right) l\right)
\end{equation*}%
gives the statement. If \textbf{\ }$n<\frac{2}{c_{1}}E\left( x,9R\right) $
and $r\geq 10$, then $l_{9}\left( x,y,n,R\right) >1$ and $R=\left(
3l-2\right) r-u\geq 34.$ Let us use Proposition \ref{p2} and Lemma \ref%
{lchain}. The latter one states that
\begin{equation}
\mathbb{P}_{x}\left( \tau _{y,r}<n\right) \geq \min\limits_{w\in \pi
_{x,y},2r-3\leq d\left( z,w\right) \leq 4r}\mathbb{P}_{z}^{l}\left( \tau
_{w,r}<m\right) .  \label{prod}
\end{equation}%
Consider the following straightforward estimates for $r\geq 10,R\geq 10$.
\begin{eqnarray*}
9r &\leq &10\left( r-1\right) \leq 10\left( \frac{R+u}{3l-2}-1\right) \leq
10\left( \frac{R+3l}{3l-2}-1\right) =10\frac{R+2}{3l-2} \\
&\leq &\frac{4R}{\left( l-1\right) }\leq 8\frac{R}{l}<9\frac{R}{l}.
\end{eqnarray*}%
Let us also note $r=\frac{R+u}{3l-2}>\frac{R}{4}$ for all $l>1.$ If $%
l=l_{9}\left( x,y,n,R\right) ,$
\begin{equation*}
m=\frac{n}{l}>\frac{2}{c_{1}}E\left( w,9r\right) =\frac{2}{c_{1}}E\left( w,9%
\frac{R+u}{3l-2}\right) ,
\end{equation*}%
and $2r\leq d\left( z,w\right) \leq 4r$ then we can apply Proposition \ref%
{p2} to obtain the uniform lower estimate%
\begin{equation*}
\mathbb{P}_{z}^{l}\left( \tau _{w,r}<m\right) >c
\end{equation*}%
for $w\in \pi _{x,y}$. This yields the uniform lower bound for all
probabilities in $\left( \ref{prod}\right) $.
\end{proof}

\begin{proof}[Proof of Theorem \protect\ref{tptf}]
The upper estimate of Theorem \ref{tptf} can be seen along the lines of the
proof of Theorem 5.1 in \cite{Td}. The lower bound is immediate from
Proposition \ref{p3} by using that%
\begin{equation*}
\mathbb{P}_{x}\left( T_{x,R}<n\right) \geq \mathbb{P}_{x}\left( \tau
_{y,r}<n\right) .
\end{equation*}%
and minimizing $l_{9}\left( x,y,n\right) $ for $d=d\left( x,y\right)
=2R,y\in S\left( x,2R\right) ,\frac{d}{4}=R/2\leq r<R$.
\end{proof}

\section{Very strongly recurrent graphs}

\label{sVSR}

\begin{definition}
Following \cite{B1} we say that a graph is very strongly recurrent $\left(
VSR\right) $ if there is a $c>0$ such that for all $x\in \Gamma ,r>0,w\in
\partial B\left( x,r\right) $%
\begin{equation*}
\mathbb{P}_{w}\left( \tau _{x}<T_{x,2r}\right) \geq c.
\end{equation*}
\end{definition}

In this section we deduce an off-diagonal heat kernel lower bound for very
strongly recurrent graphs. The proof is based on Theorem \ref{tptf} and the
fact that very strong recurrence implies the elliptic Harnack inequality
(c.f. \cite{B1}). Let us mention here that the strong recurrence was defined
among others in \cite{Td} and one can see easily that strong recurrence in
conjunction with the elliptic Harnack inequality is equivalent to very
strong recurrence. It is worth to note, that the usually considered finitely
ramified fractals and their pre-fractal graphs are (very) strongly
recurrent. \

\begin{theorem}
\label{tsGE}Let us assume that $\left( \Gamma ,w\right) $ satisfies $\left(
p_{0}\right) $ and is very strongly recurrent furthermore satisfies $\left(
\overline{E}\right) $. Then there are $c,C>0$ such that for all $x,y\in
\Gamma ,n\geq d\left( x,y\right) $
\begin{equation*}
\widetilde{p}_{n}\left( x,y\right) \geq \frac{c}{V\left( x,e\left(
x,n\right) \right) }\exp \left[ -Cl_{9}\left( x,y,\frac{1}{2}n,d\right) %
\right] ,
\end{equation*}%
where $d=d\left( x,y\right) .$
\end{theorem}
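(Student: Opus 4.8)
The plan is to obtain the off-diagonal lower bound for $\widetilde{p}_n(x,y)$ by combining the exit/hitting time estimate from Theorem \ref{tptf} (specifically Proposition \ref{p3}) with a standard argument that converts a lower bound on the probability of hitting a nearby ball in time $\sim n/2$ into a pointwise lower bound on the heat kernel. The key idea is that if the walk can reach a ball around $y$ in at most $n/2$ steps with a probability that is exponentially large (in the sense of $\exp[-Cl_9]$), and if it stays near $y$ for the remaining $\sim n/2$ steps, then it is at $y$ at time $n$ (or $n+1$) with the claimed probability, divided by the local volume factor.

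First I would fix $d = d(x,y)$ and choose an appropriate radius $r$ comparable to $d$ (say $r \simeq d$, or more precisely tuned so that Proposition \ref{p3} applies with $n$ replaced by $n/2$). Using the strong Markov property at the hitting time $\tau_{y,r}$, I would write

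\begin{equation*}
\mathbb{P}_x\left(X_n = y\right) \geq \sum_{k \leq n/2} \mathbb{P}_x\left(\tau_{y,r} = k, X_k = w\right)\, \mathbb{P}_w\left(X_{n-k} = y\right),
\end{equation*}

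for $w$ ranging over the boundary of the ball hit. The first factor is controlled from below by Proposition \ref{p3}, which gives $\mathbb{P}_x(\tau_{y,r} < n/2) \geq c\exp[-C' l_9(x,y,\tfrac{1}{2}n,\cdot)]$. The second factor, a near-diagonal return probability, must be bounded below by something like $c/V(x,e(x,n))$; this is where the remaining time $n - k \geq n/2$ is spent, and here one uses Proposition \ref{p1} (the walk stays in a ball of radius $\simeq e(x,n)$ for time $\sim E \simeq n$ with positive probability) together with very strong recurrence to force an actual return to $y$, not merely confinement. The parity issue is absorbed by passing to $\widetilde{p}_n = p_n + p_{n+1}$, and the measure normalization $1/\mu(y)$ together with spreading the return mass over the ball produces the volume factor $V(x,e(x,n))$.

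The main obstacle I expect is establishing the near-diagonal lower bound for the return probability, i.e. showing $\widetilde{p}_m(w,y) \geq c/V(x,e(x,n))$ for $m \simeq n$ and $w,y$ within a ball of radius $\simeq e(x,n)$. In the classical route this comes from a diagonal lower bound via Cauchy--Schwarz and a diagonal upper bound, but the paper explicitly wants to avoid heat kernel upper estimates. Instead, very strong recurrence must do the work: it guarantees that from any point on the sphere the walk hits the center before exiting a doubled ball with probability bounded below, which (combined with $(\overline{E})$ and the time-confinement from Proposition \ref{p1}) lets one lower-bound the probability of being exactly at $y$ at the right time. Carefully matching the radius $e(x,n)$ to the mean-exit-time scaling — so that $E(x,e(x,n)) \simeq n$ and hence the walk genuinely equilibrates inside $B(x,e(x,n))$ within $\sim n$ steps — is the delicate bookkeeping, and getting the volume factor to be exactly $V(x,e(x,n))$ rather than some other radius is the crux of the argument.
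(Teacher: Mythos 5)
Your proposal follows essentially the same route as the paper: decompose at the first hitting time of a small ball near one endpoint, bound that hitting probability by Proposition \ref{p3} (via Theorem \ref{tptf}), and finish with the near-diagonal lower estimate, which the paper derives exactly by your proposed mechanism --- very strong recurrence forces a hit of the target point, and the diagonal lower bound of Proposition \ref{pDLE} (confinement plus Cauchy--Schwarz, no heat-kernel upper bounds) converts this into a pointwise bound (this is Proposition \ref{pNDLE}). The one correction: with your orientation (start at $x$, hit a ball around $y$) the diagonal bound is applied at $y$ and yields $c/V\left( y,e\left( y,n\right) \right) $, which is not comparable to the stated factor since volume doubling is not assumed; you must instead run the walk from $y$ toward a sphere around $x$, as the paper does, using the symmetry $\widetilde{p}_{n}\left( x,y\right) =\widetilde{p}_{n}\left( y,x\right) $, so that the diagonal bound lands at $x$ and produces $V\left( x,e\left( x,n\right) \right) $.
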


\begin{remark}
Typical examples for very strongly recurrent graphs are pre-fractal
skeletons of p.c.f. self similar sets (for the definition, and further
reading see \cite{B1} and \cite{B2}). \ We recall an example of a very
strongly recurrent graph for which volume doubling does not hold but the
elliptic Harnack inequality does. The example is due to Barlow (Lemma
5.1,5.2 of \cite{B1}) and Delmotte's (c.f. \cite{D} Section 5.). \ Let us
consider $\Gamma _{1},\Gamma _{2}$ two trees which are $\left( VSR\right) $
and assume that $V_{i}\left( x,R\right) \simeq R^{\alpha _{i}},E\left(
x,R\right) \simeq R^{\beta _{i}},$ $\alpha _{1}\neq \alpha _{2},$%
\begin{equation*}
\gamma =\beta _{1}-\alpha _{1}=\beta _{2}-\alpha _{2}>0
\end{equation*}%
which basically means that
\begin{equation*}
\rho \left( x,R,2R\right) \simeq R^{\gamma }
\end{equation*}%
for both graphs. Such trees are constructed in \cite{B1}.\ Let $\Gamma $ be
the joint of $\Gamma _{1}$\ and $\Gamma _{2},$ which means that two vertices
$O_{1},O_{2}$ \ are chosen and identified (for details see \cite{D}). \ One
can also see that $\Gamma $ is $\left( VSR\right) $ and hence satisfies the
Harnack inequality but not the volume doubling property. \ This means that $%
\Gamma $ is an example for graphs that satisfies the Harnack inequality\ but
not the usual volume properties.
\end{remark}

It was realized some time ago that the so-called near diagonal lower
estimate $\left( \ref{NDLE}\right) $ is a crucial step to obtain
off-diagonal lower estimates. \ Here we utilize the fact that the near
diagonal lower bound is an easy consequence of very strong recurrence. As we
shall see the proof does not use the diagonal upper estimate and assumption
on the volume.

\begin{proposition}
\label{pDLE}Assume $\left( p_{0}\right) $ and $\left( \overline{E}\right) $,
then there is a $c>0$ such that for all $x\in \Gamma ,n>0$
\begin{equation*}
p_{2n}\left( x,x\right) \geq \frac{c}{V\left( x,e\left( x,2n\right) \right) }%
.
\end{equation*}
\end{proposition}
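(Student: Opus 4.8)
The plan is to derive the on-diagonal bound from the Cauchy--Schwarz inequality combined with the survival estimate of Proposition \ref{p1}. First I would record the standard consequence of reversibility and the Chapman--Kolmogorov identity: since $p_{n}(x,y)=p_{n}(y,x)$ and $P_{2n}(x,x)=\sum_{y}P_{n}(x,y)P_{n}(y,x)$, one has
\[ p_{2n}(x,x)=\sum_{y}p_{n}(x,y)^{2}\mu (y). \]
Restricting the sum to a ball $B=B(x,R)$ and applying Cauchy--Schwarz to the pairing $\sum_{y\in B}p_{n}(x,y)\mu (y)=\mathbb{P}_{x}(X_{n}\in B)$ yields
\[ p_{2n}(x,x)\geq \frac{1}{V(x,R)}\Big(\sum_{y\in B}p_{n}(x,y)\mu (y)\Big)^{2}=\frac{\mathbb{P}_{x}(X_{n}\in B)^{2}}{V(x,R)}. \]
Because $\{X_{n}\in B\}\supseteq \{T_{x,R}>n\}$, it then suffices to bound $\mathbb{P}_{x}(T_{x,R}>n)$ below by a positive constant for a suitable radius $R$.

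The radius is dictated by the survival estimate. Proposition \ref{p1} gives $\mathbb{P}_{x}(T_{x,R}>n)>c$ whenever $n\leq \tfrac{1}{4}E(x,R)$, so I would take $R=e(x,4n)$: by the definition of the inverse function $e$ this forces $E(x,R)\geq 4n$, hence $n\leq \tfrac{1}{4}E(x,R)$ and $\mathbb{P}_{x}(T_{x,R}>n)>c$. Feeding this into the Cauchy--Schwarz step gives
\[ p_{2n}(x,x)\geq \frac{c}{V(x,e(x,4n))}, \]
which is the assertion except that the scale appears as $e(x,4n)$ rather than $e(x,2n)$.

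This last replacement is the step I expect to be the main obstacle, precisely because no volume growth hypothesis is available and $1/V(x,e(x,4n))$ is the \emph{weaker} of the two lower bounds. The favorable ingredient is the anti-doubling of the mean exit time: by Remark \ref{rEbar}, $(\overline{E})$ implies $(\ref{PD3E})$, so there is a constant $A_{T}$ with
\[ E(x,A_{T}e(x,2n))\geq 2E(x,e(x,2n))\geq 4n, \]
whence $e(x,2n)\leq e(x,4n)\leq A_{T}e(x,2n)$. What remains is to control $V(x,e(x,4n))$ by $V(x,e(x,2n))$, i.e. to compare $V(x,A_{T}\rho )$ with $V(x,\rho )$ at $\rho =e(x,2n)$. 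Since a uniform volume doubling is exactly what the paper refuses to assume, I would resolve this by a dichotomy on the scale: at the scales where a doubling comparison genuinely fails the function $E(x,\cdot )$ must increase sharply, so that $e(x,4n)=e(x,2n)$ already holds there and the two scales coincide, while at the remaining scales the comparison of volumes over a bounded ratio of radii is harmless. Making this dichotomy quantitative, using $(\ref{betaprime})$ and the precise definition of $e(x,\cdot )$, is the delicate point, and it is where the absence of a volume hypothesis is genuinely felt.
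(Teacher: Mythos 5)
Your opening moves (reversibility plus Cauchy--Schwarz, then reduction to a lower bound on the survival probability $\mathbb{P}_{x}\left( T_{x,R}>n\right) $) are exactly the right skeleton; the paper itself gives no argument here but defers to Proposition 6.4 of \cite{Td}, whose proof has precisely this shape. The genuine gap is the step you yourself flag, and the dichotomy you propose to close it is false, not merely delicate. Under $\left( p_{0}\right) $ and $\left( \overline{E}\right) $ alone there is no mechanism forcing $E\left( x,\cdot \right) $ to increase sharply where volume doubling fails; if anything the implication runs the other way, since rapid volume growth makes escape easier and thus makes $E$ grow \emph{slowly}. Concretely, take $\Gamma $ to be the $3$-regular tree with unit weights. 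Then $\left( p_{0}\right) $ holds, the uniform outward drift gives $E_{y}\left( x,R\right) \leq CR$ for all $y\in B\left( x,R\right) $ while trivially $E\left( x,R\right) \geq R$, so $\left( \overline{E}\right) $ holds and $E\left( x,R\right) \simeq R$; hence $e\left( x,4n\right) \simeq 2e\left( x,2n\right) $, so the two scales never coincide, while $V\left( x,e\left( x,4n\right) \right) /V\left( x,e\left( x,2n\right) \right) \simeq 2^{e\left( x,2n\right) }$ is unbounded, so the volume comparison fails at every large scale. Both horns of your dichotomy fail simultaneously, and your intermediate bound $c/V\left( x,e\left( x,4n\right) \right) $ is exponentially weaker than the assertion on this graph.

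The repair is not to compare volumes at the two scales but to sharpen the survival estimate, so that the radius $R=e\left( x,2n\right) $ works from the start. For $B=B\left( x,R\right) $ write $E\left( x,R\right) =\sum_{k\geq 0}\mathbb{P}_{x}\left( T_{x,R}>k\right) $ and split the sum at $k=n$: the first $n$ terms contribute at most $n$, while by the Markov property
\[
\sum_{k\geq n}\mathbb{P}_{x}\left( T_{x,R}>k\right) =\sum_{z\in B}\mathbb{P}_{x}\left( T_{x,R}>n,X_{n}=z\right) E_{z}\left( B\right) \leq \mathbb{P}_{x}\left( T_{x,R}>n\right) \overline{E}\left( x,R\right) .
\]
Hence
\[
\mathbb{P}_{x}\left( T_{x,R}>n\right) \geq \frac{E\left( x,R\right) -n}{\overline{E}\left( x,R\right) },
\]
which strictly improves the bound $\frac{E\left( x,R\right) -2n}{2\overline{E}\left( x,R\right) }$ that Proposition \ref{p1} quotes from Lemma 5.3 of \cite{Td}; the factor-of-two gain is exactly what is needed. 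Choosing $R=e\left( x,2n\right) $ gives $E\left( x,R\right) \geq 2n$, so $E\left( x,R\right) -n\geq \frac{1}{2}E\left( x,R\right) $, and $\left( \overline{E}\right) $ yields $\mathbb{P}_{x}\left( T_{x,R}>n\right) \geq \frac{1}{2C}$. Inserting this constant into your Cauchy--Schwarz inequality gives $p_{2n}\left( x,x\right) \geq c/V\left( x,e\left( x,2n\right) \right) $ with no change of scale; this is essentially the argument behind the result the paper cites.
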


For the proof see Proposition 6.4 of \cite{Td}.

\begin{proposition}
$\label{pNDLE}$Let us assume that $\left( \Gamma ,\mu \right) $ satisfies $%
\left( p_{0}\right) $. If the graph is very strongly recurrent and $\left(
\overline{E}\right) $ holds, then there are $c,c^{\prime }>0$ such that for
all $x,y\in \Gamma ,m\geq \frac{2}{c^{\prime }}E\left( x,2d\left( x,y\right)
\right) $%
\begin{equation}
\widetilde{p}_{m}\left( x,y\right) \geq \frac{c}{V\left( x,e\left(
x,m\right) \right) }.  \label{NDLE}
\end{equation}
\end{proposition}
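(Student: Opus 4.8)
The plan is to deduce the off-diagonal estimate from the on-diagonal bound of Proposition~\ref{pDLE} by a first-passage decomposition at the single vertex $x$, exploiting that very strong recurrence makes the walk hit $x$ with probability bounded below within the allotted number of steps. Set $d=d\left(x,y\right)$ and assume $d\geq 1$ (the case $d=0$ is Proposition~\ref{pDLE} itself). By reversibility $p_{j}\left(x,y\right)=p_{j}\left(y,x\right)$, so $\widetilde{p}_{m}\left(x,y\right)=\frac{1}{\mu \left(x\right)}\left[P_{m}\left(y,x\right)+P_{m+1}\left(y,x\right)\right]$; decomposing each term according to the first hitting time $\tau _{x}$ of $x$ and applying the strong Markov property, I would obtain
\[
\widetilde{p}_{m}\left(x,y\right)\geq \sum_{k=0}^{m}\mathbb{P}_{y}\left(\tau _{x}=k\right)\widetilde{p}_{m-k}\left(x,x\right).
\]
The key feature is that the base point stays at $x$ throughout, so no comparison between volumes centred at $x$ and at $y$ is ever required; this is precisely what allows the volume growth hypothesis to be dropped.

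Next I would restrict the sum to $1\leq k\leq m/2$, which loses nothing at the lower end since $\tau _{x}\geq 1$ almost surely when $y\neq x$. For each such $k$ exactly one of $m-k$ and $m-k+1$ is even, and because $k\geq 1$ that even value does not exceed $m$; feeding it into Proposition~\ref{pDLE} and using the monotonicity of $e\left(x,\cdot \right)$ and $V\left(x,\cdot \right)$ gives $\widetilde{p}_{m-k}\left(x,x\right)\geq c/V\left(x,e\left(x,m\right)\right)$. Here the passage to $\widetilde{p}$ is what absorbs the parity of the return time, and discarding $k=0$ is what keeps the argument of $e$ below $m$. Summing, the problem reduces to
\[
\widetilde{p}_{m}\left(x,y\right)\geq \frac{c}{V\left(x,e\left(x,m\right)\right)}\,\mathbb{P}_{y}\left(\tau _{x}\leq m/2\right),
\]
so it remains only to bound the hitting probability below by a constant.

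For the hitting probability I would apply very strong recurrence at the single scale $r=d$. Since $y\in S\left(x,d\right)\subset \partial B\left(x,d\right)$, the definition gives $\mathbb{P}_{y}\left(\tau _{x}<T_{x,2d}\right)\geq c$, and I would convert this geometric statement into a time-constrained one by discarding the event that the exit from $B\left(x,2d\right)$ is slow: on $\left\{\tau _{x}<T_{x,2d}\right\}\setminus \left\{\tau _{x}\leq m/2\right\}$ one has $T_{x,2d}>m/2$, so by Markov's inequality and $\left(\overline{E}\right)$, $\mathbb{P}_{y}\left(T_{x,2d}>m/2\right)\leq 2\mathbb{E}_{y}\left[T_{x,2d}\right]/m\leq 2CE\left(x,2d\right)/m$. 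The hypothesis $m\geq \left(2/c^{\prime }\right)E\left(x,2d\right)$ bounds this by $Cc^{\prime }$, so choosing $c^{\prime }$ with $Cc^{\prime }\leq c/2$ leaves $\mathbb{P}_{y}\left(\tau _{x}\leq m/2\right)\geq c/2$ and finishes the proof. Note that it is exactly $\left(VSR\right)$ at scale $r=d$ that produces the ball $B\left(x,2d\right)$, explaining why the hypothesis is phrased in terms of $E\left(x,2d\right)$; note also that $\left(VSR\right)$ lets us hit the point $x$ directly, so the ball-hitting estimate of Proposition~\ref{p2} is not needed here.

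The step I expect to be the crux is this last conversion from a recurrence statement (hit $x$ before leaving $B\left(x,2d\right)$) into one with a genuine time budget (hit $x$ within $m/2$ steps). It works because $\left(\overline{E}\right)$ controls not merely $E\left(x,2d\right)$ but the expected exit time $E_{y}\left(x,2d\right)$ from every starting point $y\in B\left(x,2d\right)$, so a single Markov inequality suffices; without $\left(\overline{E}\right)$ the expectation $\mathbb{E}_{y}\left[T_{x,2d}\right]$ could not be controlled uniformly in the starting vertex. Apart from this, only the parity bookkeeping of the second paragraph seems to demand real care.
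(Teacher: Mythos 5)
Your proposal is correct and takes essentially the same route as the paper's own proof: a first-hit decomposition at the vertex $x$, the on-diagonal bound of Proposition \ref{pDLE}, and then very strong recurrence combined with Markov's inequality (under $\left( \overline{E}\right) $) to bound the hitting probability $\mathbb{P}_{y}\left( \tau _{x}<T_{x,2d}\right) -\mathbb{P}_{y}\left( T_{x,2d}\geq \cdot \right) $ from below. Your explicit parity bookkeeping via restricting to $1\leq k\leq m/2$, and your explicit appeal to $\left( \overline{E}\right) $ to control $\mathbb{E}_{y}\left[ T_{x,2d}\right] $ uniformly in the starting point, are minor refinements of steps the paper performs implicitly (it uses monotonicity of the diagonal kernel and writes $E\left( x,2r\right) /m$ where $\overline{E}\left( x,2r\right) /m$ is meant), not a different argument.
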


\begin{proof}
The proof starts with a first hit decomposition and uses Proposition \ref%
{pDLE}.
\begin{eqnarray*}
\widetilde{p}_{m}\left( y,x\right) &\geq &\sum_{i=0}^{m-1}\mathbb{P}%
_{y}\left( \tau _{x}=i\right) \widetilde{p}_{m-i}\left( x,x\right) \geq
\mathbb{P}_{y}\left( \tau _{x}<m\right) \widetilde{p}_{m}\left( x,x\right) \\
&\geq &\frac{c}{V\left( x,e\left( x,m\right) \right) }\mathbb{P}_{y}\left(
\tau _{x}<m\right) .
\end{eqnarray*}
Denote $r=d\left( x,y\right) ,$%
\begin{equation*}
\mathbb{P}_{y}\left( \tau _{x}<m\right) \geq \mathbb{P}_{y}\left( \tau
_{x}<T_{x,2r}<m\right) \geq \mathbb{P}_{y}\left( \tau _{x}<T_{x,2r}\right) -%
\mathbb{P}_{y}\left( T_{x,2r}\geq m\right) .
\end{equation*}%
From $\left( VSR\right) $ \ we have that $\mathbb{P}_{y}\left( \tau
_{x}<T_{x,2r}\right) >c$ so from $m\geq \frac{2}{c^{\prime }}E\left(
x,2r\right) $ and from the Markov inequality it follows that
\begin{equation*}
\mathbb{P}_{y}\left( T_{x,2r}\geq m\right) \leq \frac{E\left( x,2r\right) }{m%
}\leq c^{\prime }/2.
\end{equation*}%
Consequently we have that $\mathbb{P}_{y}\left( \tau _{x}<m\right)
>c^{\prime }/2$ and the result follows.
\end{proof}

\begin{proof}[Proof of Theorem \protect\ref{tsGE}\protect\bigskip ]
If $l=l_{9}\left( x,y,n,d\left( x,y\right) \right) =1$ , then $n>\frac{2}{%
c^{\prime }}E\left( x,9d\right) >\frac{2}{c^{\prime }}E\left( x,2d\right) $
and the statement follows from Proposition $\text{\ref{pNDLE}. }$Let us
assume that $l>1$ and start with a path decomposition. Denote $%
m=\left\lfloor \frac{n}{l}\right\rfloor ,$ $r=\left\lfloor \frac{R}{l}%
\right\rfloor ,$ $S=\left\{ y:d\left( x,y\right) =r\right\} ,$ $\tau =\tau
_{S}$
\begin{eqnarray*}
\widetilde{p}_{n}\left( y,x\right) &=&\frac{1}{\mu \left( x\right) }\mathbb{P%
}_{y}\left( X_{n}=x\text{ or }X_{n+1}=x\text{ }\right) \\
&\geq &\sum_{i=0}^{n-m-1}\sum_{w\in S}\mathbb{P}_{y}\left( X_{\tau }=w,\tau
=i\right) \min\limits_{w\in S}\widetilde{p}_{n-i}\left( w,x\right) \\
&\geq &\sum_{i=0}^{n-m-1}\mathbb{P}_{y}\left( \tau =i\right)
\min\limits_{w\in S}\widetilde{p}_{n-i}\left( w,x\right) .
\end{eqnarray*}%
The next step is to use the near diagonal lower estimate:
\begin{eqnarray*}
\widetilde{p}_{n}\left( y,x\right) &\geq &\sum_{i=0}^{n-m-1}\mathbb{P}%
_{y}\left( \tau =i\right) \min\limits_{w\in S}\widetilde{p}_{n-i}\left(
w,x\right) \\
&\geq &\sum_{i=0}^{n-m-1}\mathbb{P}_{y}\left( \tau =i\right) \frac{c}{%
V\left( x,e\left( x,n-i\right) \right) } \\
&\geq &\mathbb{P}_{y}\left( \tau <\frac{n}{2}\right) \frac{c}{V\left(
x,e\left( x,n\right) \right) }.
\end{eqnarray*}%
In the proof of Theorem \ref{tptf} we have seen that
\begin{equation*}
\mathbb{P}_{y}\left( \tau _{x,r}<\frac{n}{2}\right) \geq c\exp -Cl_{9}\left(
x,y,\frac{n}{2},d-r\right) ,
\end{equation*}%
which finally yields that
\begin{eqnarray*}
\widetilde{p}_{n}\left( y,x\right) &\geq &\frac{c}{V\left( x,e\left(
x,n\right) \right) }\exp -Cl_{9}\left( x,y,\frac{n}{2},d-r\right) \\
&\geq &\frac{c}{V\left( x,e\left( x,n\right) \right) }\exp -Cl_{9}\left( x,y,%
\frac{1}{2}n,d\right) .
\end{eqnarray*}
\end{proof}

\section{Heat kernel lower bound for graphs}

\label{sle}In this section the following off-diagonal lower bound is proved.
\

\begin{theorem}
\label{tLE}Let us assume that the graph $\left( \Gamma ,\mu \right) $
satisfies $\left( p_{0}\right) $. We also suppose that $\left( \overline{E}%
\right) $ and the elliptic Harnack inequality $\left( H\right) $ hold. Then
there are $c,C,D>0$ constants such that for all $x,y\in \Gamma ,n\geq
d\left( x,y\right) $%
\begin{equation*}
\widetilde{p}_{n}\left( x,y\right) \geq \frac{c}{V\left( x,e\left(
x,n\right) \right) r^{D}}\exp \left( -Cl_{9}\left( x,y,\frac{n}{2}\right)
\right) .
\end{equation*}%
where $e\left( x,n\right) $\ is the inverse of $E\left( x,R\right) \ $in the
second variable and $l=l_{9}\left( x,y,\frac{n}{2}\right) ,$ $d=d\left(
x,y\right) ,$ $r=\frac{d}{3l}.$
\end{theorem}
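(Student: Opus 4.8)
The plan is to keep the architecture of the proof of Theorem \ref{tsGE} intact and replace its single ingredient that genuinely exploited very strong recurrence — the clean near diagonal estimate of Proposition \ref{pNDLE} — by a weaker near diagonal bound that survives under the elliptic Harnack inequality alone, at the cost of the polynomial factor $r^{-D}$. The chaining half of the argument (the probability to reach a nearby sphere, governed by $l_9$) is unchanged, since it already rests only on $\left( p_{0}\right)$ and $\left( H\right)$ through Proposition \ref{p3} and Theorem \ref{tptf}.

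Concretely, I would first isolate a \emph{near diagonal estimate with polynomial loss}: under $\left( p_{0}\right)$, $\left( \overline{E}\right)$ and $\left( H\right)$ there are $c,C_{0},D>0$ such that, writing $s=d(w,x)$,
\[
\widetilde{p}_{m}(w,x)\ge \frac{c}{V(x,e(x,m))\,s^{D}}\qquad\text{whenever } m\ge C_{0}E(x,s).
\]
To prove it, run a first-hit decomposition at the vertex $x$ and combine it with the diagonal lower bound of Proposition \ref{pDLE}. Restricting the decomposition to hits before time $m/2$ and using $\widetilde{p}_{j}(x,x)\ge c/V(x,e(x,j))\ge c/V(x,e(x,m))$ for $m/2<j\le m$ gives
\[
\widetilde{p}_{m}(w,x)\ge \frac{c}{V(x,e(x,m))}\,\mathbb{P}_{w}(\tau_{x}<m/2),
\]
so everything reduces to the hitting estimate $\mathbb{P}_{w}(\tau_{x}<m/2)\ge c\,s^{-D}$.

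This hitting estimate is the heart of the matter and the step I expect to be the main obstacle. I would establish it by a \emph{multi-scale} chain: set $r_{j}=s/2^{j}$ for $j=0,\dots,J$ with $r_{J}\simeq 1$, and successively force the walk to enter $B(x,r_{j})$ having just entered $B(x,r_{j-1})$. Because the current position always satisfies $d(\cdot,x)\le r_{j-1}=2r_{j}\le 4r_{j}$, Proposition \ref{p2} applies (with centre $x$, radius $r_{j}$) and yields, for a time budget $m_{j}\simeq E(x,r_{j})$, a \emph{uniform} success probability $\ge c_{0}$ at every stage; the strong Markov property multiplies these as in Lemma \ref{lchain}, and a final $O(1)$-step descent from $\overline{B}(x,1)$ to $x$ costs only a power of $p_{0}$. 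Hence $\mathbb{P}_{w}(\tau_{x}<m/2)\ge c\,c_{0}^{\,J}=c\,s^{-D}$ with $D=\log_{2}(1/c_{0})$, the $\log_{2}s$ scales being exactly what converts the per-scale constant into a power of $s$. The delicate point is the time budget: one must verify $\sum_{j}m_{j}\le m/2$. This is where $\left( \overline{E}\right)$ enters, through $\left( \ref{PD3E}\right)$ and $\left( \ref{betaprime}\right)$, the mean exit times decay at least geometrically as the radius halves, so $\sum_{j=0}^{J}E(x,r_{j})\le C\,E(x,s)$, and the hypothesis $m\ge C_{0}E(x,s)$ closes the estimate. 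Passing from $E(\cdot,9r_{j})$ at the random starting point to $E(x,r_{j})$ is routine via $B(p,9r_{j})\subset B(x,11r_{j})$, monotonicity of exit times, $\left( \overline{E}\right)$ and the doubling $\left( \ref{beta}\right)$.

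With this weak near diagonal estimate in hand the rest mirrors Theorem \ref{tsGE}. If $l=l_{9}(x,y,\tfrac{n}{2},d)=1$ the estimate applies directly with $s=d$ (and $r=d/3\simeq d$). If $l>1$, decompose on the first hit $\tau$ of the sphere $S(x,r)$ with $r=\lfloor d/(3l)\rfloor$:
\[
\widetilde{p}_{n}(y,x)\ge \sum_{i<n/2}\mathbb{P}_{y}(\tau=i)\,\min_{w\in S(x,r)}\widetilde{p}_{n-i}(w,x)\ge \frac{c}{V(x,e(x,n))\,r^{D}}\;\mathbb{P}_{y}\big(\tau_{S(x,r)}<\tfrac{n}{2}\big),
\]
applying the weak near diagonal estimate with $s=r$ and $n-i\ge n/2$; the required $n/2\ge C_{0}E(x,r)$ is guaranteed by the definition of $l_{9}$ once the free constant $Q$ is fixed large enough. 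Finally, since reaching $B(x,r)$ from $y$ forces crossing $S(x,r)$, we have $\mathbb{P}_{y}(\tau_{S(x,r)}<\tfrac{n}{2})\ge \mathbb{P}_{y}(\tau_{x,r}<\tfrac{n}{2})$, and the chaining bound already proved in Theorem \ref{tptf} via Proposition \ref{p3} gives $\mathbb{P}_{y}(\tau_{x,r}<\tfrac{n}{2})\ge c\exp(-Cl_{9}(x,y,\tfrac{n}{2}))$. Combining the three displays yields the asserted lower bound.
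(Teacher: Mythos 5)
Your architecture coincides with the paper's. The ``near diagonal estimate with polynomial loss'' you isolate is exactly the paper's Proposition \ref{pmodc}, and the paper proves it the same way you propose: a multi-scale chain of balls centred at $x$ with geometrically shrinking radii, a uniform per-scale hitting probability supplied by Proposition \ref{p2}, time budgets pushed down the scales by $\left( \ref{PD3E}\right) $/$\left( \ref{betaprime}\right) $, and the diagonal bound of Proposition \ref{pDLE} at the bottom, the logarithmically many scales producing the loss $r^{-D}$. The final assembly (reach $S\left( x,r\right) $ at cost $\exp \left( -Cl_{9}\right) $ via Theorem \ref{tptf}/Proposition \ref{p3}, then apply the polynomial-loss near-diagonal bound from the hitting point) is also precisely the paper's two-chaining proof of Theorem \ref{tLE}. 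Your deviations are cosmetic: you halve the radii so that Proposition \ref{p2} applies in a single step per scale, where the paper shrinks by $A=\max \left\{ 9,A_{T}\right\} $ and inserts $K\approx A/4$ intermediate balls per scale; and you factor through the hitting probability $\mathbb{P}_{w}\left( \tau _{x}<m/2\right) $ instead of chaining the kernel directly.

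There is, however, one genuine flaw in your justification, located at the one point where this theorem is delicate. To verify the time condition of Proposition \ref{p2} at the random entry point $p$ you write, correctly, $E\left( p,9r_{j}\right) \leq E_{p}\left( B\left( x,11r_{j}\right) \right) \leq \overline{E}\left( x,11r_{j}\right) \leq CE\left( x,11r_{j}\right) $ using only $\left( \overline{E}\right) $ --- but then you invoke ``the doubling $\left( \ref{beta}\right) $'' to come back to $E\left( x,r_{j}\right) $. That step fails under the hypotheses of Theorem \ref{tLE}: $\left( \ref{beta}\right) $ is equivalent to $\left( \ref{TD}\right) $, which the paper derives from the time comparison principle $\left( \ref{TC}\right) $, an assumption \emph{not} made here; $\left( \overline{E}\right) $ yields only the lower growth bounds $\left( \ref{PD3E}\right) $ and $\left( \ref{betaprime}\right) $ (Remark \ref{rEbar}), never an upper doubling of $E$. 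Avoiding exactly this kind of upper regularity is the stated point of the paper. The repair is local: do not shrink the radius back, i.e.\ take per-scale budgets $m_{j}\simeq E\left( x,11r_{j}\right) $ (still summable to $\leq CE\left( x,11s\right) $ by $\left( \ref{betaprime}\right) $ alone) and state your intermediate estimate under the hypothesis $m\geq C_{0}E\left( x,11s\right) $ instead of $m\geq C_{0}E\left( x,s\right) $. This costs nothing downstream, because the factor $9$ built into the definition of $l_{9}$ (together with $x\in \pi _{x,y}$) gives $\frac{n}{2}\geq QE\left( x,9d/l\right) =QE\left( x,27r\right) \geq QE\left( x,11r\right) $, which is precisely the slack needed. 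Note that the paper's own Proposition \ref{pmodc}, stated with $m>CE\left( x,r\right) $, glosses over the same centre/radius inflation; its scale choice $r_{i}=r/A^{i}$, $m_{i}=m/2^{i}$ with $A\geq A_{T}$ is engineered so that only $\left( \ref{PD3E}\right) $ is needed to propagate the time condition, and the genuine requirement there is likewise $m\gtrsim E\left( x,3r\right) $, supplied in the application by the definition of $l_{9}$.
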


\begin{corollary}
\label{cLE}If we assume\ in addition to the conditions of Theorem \ref{tLE}
that $\beta ^{\prime }>1$ in $\left( \ref{betaprime}\right) $ then the
following more readable estimate holds:%
\begin{equation*}
\widetilde{p}_{n}\left( x,y\right) \geq \frac{c}{V\left( x,e\left(
x,n\right) \right) r^{D}}\exp \left( -C\left[ \frac{E\left( x,d\right) }{n}%
\right] ^{\frac{1}{\beta ^{\prime }-1}}\right) .
\end{equation*}
\end{corollary}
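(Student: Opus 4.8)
The plan is to derive the Corollary directly from Theorem \ref{tLE}, changing nothing in the prefactor $c/\left( V\left( x,e\left( x,n\right) \right) r^{D}\right) $ and only replacing the quantity $l=l_{9}\left( x,y,\tfrac{n}{2}\right) =l_{9}\left( x,y,\tfrac{n}{2},d\right) $ in the exponent by the explicit expression $\left( E\left( x,d\right) /n\right) ^{1/\left( \beta ^{\prime }-1\right) }$. Because $t\mapsto \exp \left( -Ct\right) $ is decreasing, an \emph{upper} bound on $l$ turns the lower bound of Theorem \ref{tLE} into the asserted one, so it suffices to prove
\begin{equation*}
l_{9}\left( x,y,\tfrac{n}{2},d\right) \leq C_{0}\left( \frac{E\left( x,d\right) }{n}\right) ^{\frac{1}{\beta ^{\prime }-1}}+1 ,
\end{equation*}
the additive $1$ being harmless since it only alters $\exp \left( -Cl\right) $ by the bounded factor $\exp \left( -C\right) $, absorbed into $c$.

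To bound $l$ I would use its definition as the minimal integer with $\frac{n/2}{l}\geq Q\max_{z\in \pi _{x,y}}E\left( z,\frac{9d}{l}\right) $: I exhibit an explicit $l^{\ast }$ for which this inequality already holds, and minimality gives $l\leq l^{\ast }$. The verification rests on the scaling estimate
\begin{equation}
\max_{z\in \pi _{x,y}}E\left( z,\tfrac{9d}{l}\right) \leq C\left( \tfrac{1}{l}\right) ^{\beta ^{\prime }}E\left( x,d\right) \label{cor-key}
\end{equation}
valid for $l>9$. Granting (\ref{cor-key}), the defining inequality at $l=l^{\ast }$ reduces to $\frac{n}{2}\geq QC\,9^{\beta ^{\prime }}\left( l^{\ast }\right) ^{1-\beta ^{\prime }}E\left( x,d\right) $; since $\beta ^{\prime }>1$ this is solved by any $l^{\ast }\geq C_{0}\left( E\left( x,d\right) /n\right) ^{1/\left( \beta ^{\prime }-1\right) }$ with $C_{0}=\left( 2QC\,9^{\beta ^{\prime }}\right) ^{1/\left( \beta ^{\prime }-1\right) }$, which is exactly how the exponent $\tfrac{1}{\beta ^{\prime }-1}$ arises. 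The degenerate case $l^{\ast }\leq 9$ (equivalently $E\left( x,d\right) /n$ bounded) is handled by the trivial bound $l\geq 1$ together with the absorption above.

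The heart of the matter — and the step I expect to be the main obstacle — is (\ref{cor-key}), since it compares mean exit times from balls centred at different points of the geodesic $\pi _{x,y}$. I would split it into a scaling step and a comparison step. For scaling, (\ref{betaprime}) applied at a fixed centre $z$ with the radii $9d/l$ and $d$ gives $E\left( z,9d/l\right) \leq C\left( 1/l\right) ^{\beta ^{\prime }}E\left( z,d\right) $, supplying the power $\left( 1/l\right) ^{\beta ^{\prime }}$. For comparison, it remains to bound $E\left( z,d\right) $ by $CE\left( x,d\right) $ uniformly over $z\in \pi _{x,y}\subseteq B\left( x,d\right) $; this is the mean-exit-time analogue of the Green-function comparison of Lemma \ref{lhg}, and it is exactly where $\left( \overline{E}\right) $ and $\left( H\right) $ enter: $\left( \overline{E}\right) $ controls the passage $E\left( z,d\right) =E_{z}\left( B\left( z,d\right) \right) \leq \overline{E}\left( x,2d\right) \leq CE\left( x,2d\right) $ (using $B\left( z,d\right) \subseteq B\left( x,2d\right) $ for $d\left( x,z\right) \leq d$), while the elliptic Harnack inequality $\left( H\right) $ gives $E\left( x,2d\right) \leq CE\left( x,d\right) $. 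Combining the two steps yields (\ref{cor-key}) and hence the Corollary. In essence this is precisely the estimate giving $\nu \left( x,n,R\right) \leq C\left( E\left( x,R\right) /n\right) ^{1/\left( \beta ^{\prime }-1\right) }$ from the Remark following (\ref{betaprime}), specialised to the single index $l_{9}\left( x,y,\tfrac{n}{2},d\right) $ rather than to the minimum over $S\left( x,2R\right) $.
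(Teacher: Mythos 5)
Your reduction of the Corollary to an upper bound on $l_{9}\left(x,y,\tfrac{n}{2},d\right)$, the scaling step via $\left(\ref{betaprime}\right)$ applied at the centre $z$, and the centre comparison $E\left(z,d\right)=E_{z}\left(B\left(z,d\right)\right)\leq E_{z}\left(B\left(x,2d\right)\right)\leq\overline{E}\left(x,2d\right)\leq CE\left(x,2d\right)$ are all sound, and this is essentially the route the paper intends (its entire proof is the remark that the Corollary is an ``easy consequence'' of Theorem \ref{tLE}, in the spirit of the bound on $\nu$ stated after $\left(\ref{betaprime}\right)$). The genuine gap is your final step: the claim that the elliptic Harnack inequality gives $E\left(x,2d\right)\leq CE\left(x,d\right)$. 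That inequality is the time doubling property $\left(\ref{TD}\right)$, which in this paper is derived only from the time comparison principle $\left(\ref{TC}\right)$ --- a hypothesis assumed neither in Theorem \ref{tLE} nor in the Corollary --- and it does not follow from $\left(p_{0}\right)$, $\left(\overline{E}\right)$ and $\left(H\right)$. The paper's own example after Theorem \ref{tsGE} shows this: two VSR trees with $E_{i}\left(z,R\right)\simeq R^{\beta_{i}}$, $\beta_{1}<\beta_{2}$, joined at a vertex $O$, satisfy $\left(p_{0}\right)$, $\left(H\right)$ and (one can check) $\left(\overline{E}\right)$, yet for $x\in\Gamma_{1}$ with $d\left(x,O\right)=D_{0}+1$ one has $E\left(x,D_{0}\right)\simeq D_{0}^{\beta_{1}}$ (the ball misses $O$) while $E\left(x,2D_{0}\right)\geq cD_{0}^{\beta_{2}}$ (with probability bounded below the walk hits $O$ before exiting and then spends expected time of order $D_{0}^{\beta_{2}}$ in the $\Gamma_{2}$ part of the ball), so $E\left(x,2D_{0}\right)/E\left(x,D_{0}\right)$ is unbounded.

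Moreover, this is not a removable technicality of your write-up: the intermediate inequality you aim at, $l_{9}\left(x,y,\tfrac{n}{2},d\right)\leq C_{0}\left(E\left(x,d\right)/n\right)^{1/\left(\beta^{\prime}-1\right)}+1$, is itself false under the stated hypotheses. In the same example take $y$ on the geodesic from $x$ to $O$ at distance $1$ from $O$, so $d=d\left(x,y\right)=D_{0}$, and take $n\simeq d^{\beta_{1}}$. Then $E\left(x,d\right)\simeq d^{\beta_{1}}\simeq n$, so your bound would force $l\leq C$; but the balls $B\left(z,9d/l\right)$ centred at points $z\in\pi_{x,y}$ near $y$ contain $\Gamma_{2}$-balls of comparable radius, hence $\max_{z\in\pi_{x,y}}E\left(z,9d/l\right)\geq c\left(d/l\right)^{\beta_{2}}$, which forces $l\geq c\left(d^{\beta_{2}}/n\right)^{1/\left(\beta_{2}-1\right)}=c\,d^{\left(\beta_{2}-\beta_{1}\right)/\left(\beta_{2}-1\right)}\rightarrow\infty$. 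What your three correct steps do prove is the Corollary with $E\left(x,2d\right)$ in place of $E\left(x,d\right)$; descending from $E\left(x,2d\right)$ to $E\left(x,d\right)$ requires time doubling, i.e.\ essentially adding $\left(\ref{TC}\right)$ or $\left(\ref{TD}\right)$ to the hypotheses. Note finally that the quantity $\nu$ in the Remark after $\left(\ref{betaprime}\right)$ escapes this difficulty only because it is a minimum over all directions $y\in S\left(x,2R\right)$, so that a geodesic avoiding slow regions can be chosen; for the fixed pair $x,y$ of the Corollary no such choice is available, which is exactly where your argument --- and, it must be said, the paper's own one-line justification --- breaks down.
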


This corollary is an easy consequence of Theorem \ref{tLE}.

\begin{remark}
Let us rephrase the statement of Theorem \ref{tLE} and Corollary \ref{cLE}.
Denote $l=l_{9}\left( x,y,\frac{n}{2}\right) $. The trivial recalculation of
the estimate
\begin{eqnarray*}
\widetilde{p}_{n}\left( x,y\right) &\geq &\frac{c}{V\left( x,e\left(
x,n\right) \right) r^{D}}\exp \left( -Cl\right) \\
&=&\frac{c}{V\left( x,e\left( x,n\right) \right) d\left( x,y\right) ^{D}}%
\exp \left( D\log 3l-Cl\right) \\
&\geq &\frac{c}{V\left( x,e\left( x,n\right) \right) d\left( x,y\right) ^{D}}%
\exp \left( -Cl\right)
\end{eqnarray*}%
clearly shows the difference between the classical lower bound and the
present one. If $\left( \ref{TC}\right) $ and $\left( \ref{betaprime}\right)
$ hold with $\beta ^{\prime }>1$ furthermore $n<c\frac{d^{\beta }}{\left(
\log E\left( x,d\right) \right) ^{\beta -1}}$ then the extra factor $%
d^{D}\left( x,y\right) $ is absorbed by the exponent:%
\begin{equation*}
\widetilde{p}_{n}\left( x,y\right) \geq \frac{c}{V\left( x,e\left(
x,n\right) \right) }\exp \left( -C\left[ \frac{E\left( x,d\right) }{n}\right]
^{\frac{1}{\beta ^{\prime }-1}}\right) .
\end{equation*}
\end{remark}

\begin{proposition}
\label{pmodc}Let us assume that $\left( p_{0}\right) ,\left( \overline{E}%
\right) $ and the elliptic Harnack inequality $\left( H\right) $ holds. Then
there are $D,c>0$ such that for $x,y\in \Gamma ,$ $r=d\left( x,y\right) ,$ $%
m>CE\left( x,r\right) $ the inequality
\begin{equation*}
\widetilde{p}_{m}\left( y,x\right) \geq \frac{c}{V\left( x,e\left(
x,m\right) \right) }r^{-D}
\end{equation*}%
holds.
\end{proposition}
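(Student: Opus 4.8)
The plan is to follow the scheme of Proposition \ref{pNDLE}, replacing the one–step point–hitting estimate that very strong recurrence provided there by an iterated ball–hitting construction based on Proposition \ref{p2}. First I would decompose at the single vertex $x$ by its first hitting time $\tau_x$ and apply the diagonal lower bound of Proposition \ref{pDLE}. Since $e(x,\cdot)$ and $V(x,\cdot)$ are non-decreasing, $\widetilde p_{m-i}(x,x)\ge c/V(x,e(x,m-i))\ge c/V(x,e(x,m))$ for $0\le i<m$ (up to adjusting constants for the parity in $\widetilde p$), so
\[
\widetilde p_m(y,x)\ge\sum_{i=0}^{m-1}\mathbb P_y(\tau_x=i)\,\widetilde p_{m-i}(x,x)\ge\frac{c}{V(x,e(x,m))}\,\mathbb P_y(\tau_x<m).
\]
Everything then reduces to the polynomial lower bound $\mathbb P_y(\tau_x<m)\ge c\,r^{-D}$, which is the genuinely new ingredient: under $(H)$ one can only hit a \emph{ball} with order–one probability, so reaching the point $x$ must cost a polynomial factor.

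Second, I would reach $x$ through a geometric cascade of shrinking concentric balls. Put $R_k=r/4^{k}$ for $k=0,\dots,K$, where $K\simeq\log_4 r$ is chosen so that $R_K\le 1$ and hence $B(x,R_K)=\{x\}$; note $R_0=r$, so the first excursion starts at $y$ with $d(x,y)=r=4R_1$. For each $k$ I would apply Proposition \ref{p2} with target centre $x$, radius $R_{k+1}$ and starting point any $w\in B(x,R_k)$: this is legitimate because $d(w,x)<R_k=4R_{k+1}$, and it yields, within a time budget $m_k$, the bound $\mathbb P_w(\tau_{x,R_{k+1}}<m_k)>c_0$. Chaining these excursions by the strong Markov property — on the intersection the walk successively enters $B(x,R_1),\dots,B(x,R_K)=\{x\}$, so $\tau_x\le m_0+\dots+m_{K-1}$ — gives $\mathbb P_y\!\left(\tau_x<\sum_k m_k\right)\ge c_0^{\,K}\ge c\,r^{-D}$ with $D=\log_4(1/c_0)$. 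Note that no containment of the path in $B(x,2r)$ is needed here: only the succession of hitting times is constrained, which avoids the loss incurred by the cruder exit–time comparison used in Proposition \ref{pNDLE}.

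The \textbf{main obstacle} is to fit the total time budget under the hypothesis $m>CE(x,r)$. Proposition \ref{p2} requires $m_k>\frac{2}{c_1}E(w,9R_{k+1})$; to make this uniform in the random landing point $w$ I would use $B(w,9R_{k+1})\subset B(x,13R_{k+1})$ together with $(\overline{E})$ to obtain the admissible choice $m_k\simeq E(x,13R_{k+1})$. It then remains to check $\sum_{k}E(x,13R_{k+1})\le CE(x,r)$. For this I would invoke the lower polynomial growth $(\ref{betaprime})$ — valid by Remark \ref{rEbar} — to make $\{E(x,13R_{k+1})\}_k$ geometrically decaying and hence dominated by its top scale $E(x,\Theta(r))$, and then a doubling (time–comparison) bound for $E$ across the fixed factor $\Theta(1)$, available under $(H)$ and $(\overline{E})$, to absorb $E(x,\Theta(r))$ into $CE(x,r)$. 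Feeding $\mathbb P_y(\tau_x<m)\ge c\,r^{-D}$ back into the reduction of the first paragraph then finishes the proof.
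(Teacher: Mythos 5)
Your architecture is essentially the paper's: a first-passage reduction at $\tau _{x}$ combined with the diagonal bound of Proposition \ref{pDLE} (exactly as in Proposition \ref{pNDLE}), followed by a cascade of concentric balls shrinking down to $\left\{ x\right\} $, each transition handled by Proposition \ref{p2}, so that $O\left( \log r\right) $ scales at constant cost each yield the factor $r^{-D}$. Your factor-$4$ contraction is even a genuine simplification: since $d\left( w,x\right) \leq R_{k}=4R_{k+1}$, Proposition \ref{p2} applies in one shot at every scale, whereas the paper contracts by $A=\max \left\{ 9,A_{T}\right\} >4$ and must therefore insert $K-1=\left\lceil A/4\right\rceil -1$ intermediate balls and sub-chain through them at each scale. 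Your treatment of the time condition of Proposition \ref{p2} via $B\left( w,9R_{k+1}\right) \subset B\left( x,13R_{k+1}\right) $ and $\left( \overline{E}\right) $ is also correct.

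The gap is exactly at what you call the main obstacle, and your resolution of it is unsupported. The absorption of the top scale, $E\left( x,13R_{1}\right) =E\left( x,\frac{13}{4}r\right) \leq CE\left( x,r\right) $, is a time-doubling inequality, i.e. $\left( \ref{TD}\right) $, and it is \emph{not} available under $\left( p_{0}\right) +\left( \overline{E}\right) +\left( H\right) $: in the paper, $\left( \ref{TD}\right) $ is derived only from the time comparison principle $\left( \ref{TC}\right) $, which is not assumed in Proposition \ref{pmodc}, while Remark \ref{rEbar} extracts from $\left( \overline{E}\right) $ only the \emph{lower} regularity $\left( \ref{PD3E}\right) $ and $\left( \ref{betaprime}\right) $, inequalities in the opposite direction. (An attempt to prove it from $\left( \overline{E}\right) $ by the strong Markov property gives only $E\left( x,2R\right) \leq E\left( x,R\right) +\overline{E}\left( x,2R\right) \leq E\left( x,R\right) +CE\left( x,2R\right) $ with $C>1$, which is vacuous.) Since all your budgets $m_{k}$ are pinned to $E\left( x,13R_{k+1}\right) $, the bookkeeping collapses without this lemma. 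The paper's proof is engineered precisely to avoid it, and this is the idea your proposal is missing: take $m_{i}=m/2^{i}$, so that $\sum_{i}m_{i}\leq m$ requires no property of $E$ whatsoever, and contract radii by $A\geq A_{T}$, so that $\left( \ref{PD3E}\right) $ gives $E\left( x,r_{i}\right) \leq 2^{-i}E\left( x,r\right) $ and the invariant $m_{i}>CE\left( x,r_{i}\right) $ propagates down the scales; each application of Proposition \ref{p2} at scale $i$ is then verified against the invariant at scale $i-1$ using only monotonicity of $E$ in the radius. (To be fair, a residue of the same difficulty persists in the paper's own first scale, where the $9r_{1}$-ball around the starting point overshoots $B\left( x,r\right) $; both arguments are cleanly repaired by reading the hypothesis as $m>CE\left( x,\kappa r\right) $ for a fixed $\kappa >1$, which is what is actually available when the proposition is invoked in the proof of Theorem \ref{tLE}. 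But that is a restatement of the hypothesis, not the unproved doubling property your argument cites as "available".)
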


\begin{proof}
The proof is based on a modified version of the chaining argument used in
the proof of Lemma \ref{lchain}. From Proposition \ref{pNDLE} we know that $%
\left( \overline{E}\right) $ implies
\begin{equation}
\widetilde{p}_{n}\left( x,x\right) \geq \frac{c}{V\left( x,e\left(
x,n\right) \right) }  \label{ldle}
\end{equation}%
and $\left( \ref{PD3E}\right) $ (see Remark \ref{rEbar}). Let us recall $%
\left( \ref{PD3E}\right) $ and set $A=\max \left\{ 9,A_{T}\right\} ,$ $%
K=\left\lceil \frac{A}{4}\right\rceil $. \ Consider a sequence of times $%
m_{i}=\frac{m}{2^{i}}$ and radii $r_{i}=\frac{r}{A^{i}}.$ \ From the
condition $m>CE\left( x,r\right) $ and $\left( \ref{PD3E}\right) $ it
follows that for all $i$%
\begin{equation}
m_{i}>CE\left( x,r_{i}\right)  \label{m>e}
\end{equation}%
holds as well. \ Let us denote $B_{i}=B\left( x,r_{i}\right) $,$\tau
_{i}=\tau _{B_{i}}$ and start a chaining.
\begin{eqnarray*}
\widetilde{p}_{m}\left( y,x\right) &=&\sum_{k=1}^{m}\mathbb{P}_{y}\left(
\tau _{1}=k\right) \min\limits_{w\in \partial B_{1}}\widetilde{p}%
_{m-k}\left( w,x\right) \\
&\geq &\sum_{i=1}^{m/2}\mathbb{P}_{y}\left( \tau _{1}=k\right)
\min\limits_{w\in \partial B_{1}}\widetilde{p}_{m-k}\left( w,x\right) \\
&\geq &\mathbb{P}_{y}\left( \tau _{1}<m/2\right) \min\limits_{1\leq k\leq
m/2}\min\limits_{w\in \partial B_{1}}\widetilde{p}_{m-k}\left( w,x\right) .
\end{eqnarray*}%
Let us continue in the same way for all $i\leq L:=\left\lceil \log
_{A}r\right\rceil $. 

It is clear that $B_{L}=\left\{ x\right\} $ which concludes to%
\begin{eqnarray*}
\widetilde{p}_{m}\left( y,x\right) &\geq &\min\limits_{w_{i}\in \partial
B_{i}}P_{y}\left( \tau _{1}<m/2\right) ... \\
&&...P_{w_{j}}\left( \tau _{j}<\frac{m}{2^{i}}\right) ..P_{w_{L}}\left( \tau
_{L}<\frac{m}{2^{L}}\right) \min\limits_{0\leq k\leq m-L}\widetilde{p}%
_{k}\left( x,x\right) .
\end{eqnarray*}%
\ From the initial conditions and $\left( \ref{PD3E}\right) $ we have $%
\left( \ref{m>e}\right) $ for all $\ j$.

Since in the consecutive steps $d\left( w_{i},x\right) >4r_{i+1}$ we insert $%
K-1$ copies of balls of radius $r_{i+1}$ splitting the distance into equal
smaller ones. \ We do chaining along them prescribing that the consecutive
balls are reached in less than $m_{i}/K$ time. \ We can choose $C$ so that
the conditions of Proposition \ref{p2} are satisfied which yields
\begin{equation*}
\mathbb{P}_{w}\left( \tau _{i}<\frac{m}{2^{i}}\right) >c_{0}^{K}
\end{equation*}%
for all $w_{j}\in B\left( x,r_{j}\right) $ and $j.$ Consequently, using $%
\left( \ref{ldle}\right) $ one has
\begin{equation*}
\widetilde{p}_{m}\left( y,x\right) \geq \frac{c}{V\left( x,e\left(
x,m\right) \right) }c_{2}^{L}\geq \frac{c}{V\left( x,e\left( x,m\right)
\right) }r^{-D}
\end{equation*}%
where $D=$ $\frac{\log \frac{1}{c_{2}}}{\log A}.$
\end{proof}

\begin{proof}[Proof of Theorem \protect\ref{tLE}]
The proof is a combination of two chaining arguments. \ First let us use
Theorem $\ref{tptf}$ to reach the boundary of $B\left( x,r\right) ,$ where $%
r=\frac{d\left( x,y\right) }{3l-1},l=l_{9}\left( x,y,\frac{n}{2},d\left(
x,y\right) \right) $, then we use Proposition \ref{pmodc}. \
\end{proof}

\end{document}